\begin{document}
\title{Two-sorted Modal Logic for Formal and Rough Concepts\thanks{This work is partially supported by National Science and Technology Council (NSTC) of Taiwan under Grant  No. 110-2221-E-001-022-MY3}
}
%
%
\author{Prosenjit Howlader \and
 Churn-Jung Liau
}
\authorrunning{P. Howlader et al.}
%
\institute{Institute of Information Science, Academia Sinica, Taipei, 115, Taiwan\\
\email{prosen@mail.iis.sinica.edu.tw}\\
\email{liaucj@iis.sinica.edu.tw}}
\maketitle              
\begin{abstract}
In this paper, we propose two-sorted modal logics for the representation and reasoning of concepts arising from rough set theory (RST) and formal concept analysis (FCA). These logics are interpreted in two-sorted bidirectional frames, which are essentially formal contexts with converse relations. On one hand, the logic $\textbf{KB}$ contains ordinary necessity and possibility modalities and can represent rough set-based concepts. On the other hand, the logic $\textbf{KF}$ has window modality  that can represent formal concepts.
We study the relationship between  \textbf{KB} and \textbf{KF} by proving a correspondence theorem.
It is then shown that, using the formulae with modal operators in \textbf{KB} and \textbf{KF}, we can capture formal concepts based on RST and FCA and their lattice structures.

\keywords{Modal logic  \and Formal concept analysis \and Rough set theory.}
\end{abstract}
\section{Introduction}
Rough set theory (RST) \cite{pawlak1982rough} and formal concept analysis (FCA) \cite{FCA} are both well-established areas of study with a variety of applications in fields like knowledge representation and data analysis. There has been a great deal of research on the intersections of RST and FCA over the years, including those by Kent \cite{RCA}, Saquer et al \cite{saquer2001concept}, Hu et al \cite{hul}, D\"{u}ntsch and Gediga \cite{duntsch2002modal}, Yao \cite{yao2004concept}, Yao et al \cite{RSAFCAyao}, Meschke \cite{aclmc} , and Ganter et al \cite{FCAARDT}.

Central notions in FCA are formal contexts and their associated concept lattices.  A formal context (or simply context) is a triple  $\mathbb{K}:=(G, M, I)$ where $I\subseteq G\times M$. A given context induces two maps $+: (\mathcal{P}(G), \subseteq)\rightarrow (\mathcal{P}(M), \supseteq) $ and $-: (\mathcal{P}(M), \supseteq)\rightarrow (\mathcal{P}(G), \subseteq)$, where for all $A\in \mathcal{P}(G)$ and $B\in \mathcal{P}(M)$:
\[A ^{+}= \{ m \in  M \mid \mbox{ for all }  g \in  A ~~  g I m \},\]
\[B ^{-}= \{ g \in  G \mid \mbox{ for all }  m \in  B ~~  g I m \}.\]





A pair of set $(A, B)$ is called a {\it formal  concept} (or simply concept) if $A^{+}=B$ and $A=B^{-}$. The set $\mathcal{FC}$ of all concepts forms a complete lattice and is called a {\it concept lattice}.  

On the other hand, the basic construct of the original RST is the {\it Pawlakian approximation space} $(W, E)$, where $W$ is the universe and $E$ is an equivalence relation on $W$. Then, by applying notions of modal logic to RST, Yao et al~\cite{yao1996generalization} proposed generalised approximation space $(W, E)$ with $E$ being any binary relation on $W$. In addition, they also suggested to use a binary relation between two universes of discourse, containing
objects and properties respectively, as another generalised formulation of approximation spaces.  The rough set model over two universes is thus a formal context in FCA. D\"{u}ntsch et al. \cite{duntsch2002modal} defined sufficiency, dual sufficiency, possibility and necessity operators based on a rough set model over two universes, where necessity and possibility operators are, in fact, rough set approximation operators. Based on these operators, D\"{u}ntsch et al. \cite{duntsch2002modal} and Yao \cite{yao2004concept}  introduced property oriented concepts and object oriented concepts respectively.

For a context  $\mathbb{K}:=(G,M,I)$, $I(x):= \{y\in M: xI y\}$ and $I^{-1}(y):=\{x\in G:xIy\}$ are the $I$-neighborhood and $I^{-1}$-neighbourhood of $x$ and $y$ respectively. For $A\subseteq G$, and $B\subseteq M$, the pairs of dual approximation operators are defined as:

$B_{I}^{\lozenge^{-1}}:=\{x\in G:I(x)\cap B\neq\emptyset\},~~~~ ~~~~~B_{I}^{\square^{-1}}:=\{x\in G:I(x)\subseteq B\}$.

$A_{I^{-1}}^{\lozenge}:=\{y\in M:I^{-1}(y)\cap A\neq\emptyset\},~~~ A_{I^{-1}}^{\square}:=\{y\in M:I^{-1}(y)\subseteq A\}$.

If there is no confusion about the relation involved, we shall omit the subscript and denote $B_{I}^{\lozenge^{-1}}$ by  $B^{\lozenge^{-1}}$, $B_{I}^{\square^{-1}}$ by $B^{\square^{-1}}$ and  similarly for the case of $A$. A pair $(A,B)$ is a  {\it property oriented concept}  of  $\mathbb{K}$ iff $ A^{\lozenge}=B$ and $B^{\square^{-1}}=A$; and it  is an {\it object oriented concept}  of $\mathbb{K}$ iff $ A^{\square}=B$ and $B^{\lozenge^{-1}}=A$. As in the case of FCA, the set $\mathcal{OC}$ of all object oriented concepts and the set $\mathcal{PC}$ of all property oriented concepts form complete lattices,  which are called {\it object oriented concept lattice} and {\it property oriented concept lattice} respectively.

For any concept $(A, B)$, the set $A$ is called its {\it extent } and $B$ is called its {\it intent}. For concept lattices ${\mathcal X}={\mathcal FC},{\mathcal PC},{\mathcal OC}$, the set of all extents and intents of $\mathcal X$  are denoted  by  $\mathcal{X}_{ext}$ and $\mathcal{X}_{int}$, respectively.

\begin{proposition}
    {\rm For a context $\mathbb{K}:=(G, M, I)$, the following holds.
    \begin{itemize}
        \item[(a)] $\mathcal{FC}_{ext}=\{ A\subseteq G\mid A^{+-}=A\}$ and $\mathcal{FC}_{int}=\{B\subseteq M\mid B^{-+}=B\}$.
        \item[(b)] $\mathcal{PC}_{ext}=\{ A\subseteq G\mid A^{\lozenge\square^{-1}}=A\}$ and $\mathcal{PC}_{int}=\{B\subseteq M\mid B^{\square^{-1}\lozenge}=B\}$.
        \item[(c)] $\mathcal{OC}_{ext}=\{A\subseteq G\mid A^{\square\lozenge^{-1}}=A\}$ and $\mathcal{OC}_{int}=\{ B\subseteq M\mid B^{\lozenge^{-1}\square}=B\}$.
    \end{itemize}}
\end{proposition}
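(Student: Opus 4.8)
The plan is to reduce the proposition to a single trivial observation: for each of the three kinds of concept, the intent of a concept is determined by its extent through the first defining equation (and symmetrically the extent by the intent), so membership in the displayed fixed-point sets is just that equation re-expressed. To handle the three parts uniformly, let $f$ and $g$ denote the relevant pair of operators between $(\mathcal P(G),\subseteq)$ and $(\mathcal P(M),\subseteq)$: for (a) take $A^{f}:=A^{+}$ and $B^{g}:=B^{-}$; for (b) take $A^{f}:=A^{\lozenge}$ and $B^{g}:=B^{\square^{-1}}$; for (c) take $A^{f}:=A^{\square}$ and $B^{g}:=B^{\lozenge^{-1}}$. In each case the definitions given in the text say exactly that a pair $(A,B)$ is a concept of the corresponding type iff $A^{f}=B$ and $B^{g}=A$; write $A^{fg}$ for $(A^{f})^{g}$ and $B^{gf}$ for $(B^{g})^{f}$, which unwind to $A^{+-},A^{\lozenge\square^{-1}},A^{\square\lozenge^{-1}}$ and to $B^{-+},B^{\square^{-1}\lozenge},B^{\lozenge^{-1}\square}$ respectively.

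The extent identities then follow in both directions. If $A^{fg}=A$, then the pair $(A,A^{f})$ trivially satisfies $A^{f}=A^{f}$ and $(A^{f})^{g}=A^{fg}=A$, hence is a concept, so $A$ is an extent. Conversely, if $(A,B)$ is a concept, then $B=A^{f}$ by the first equation, and therefore $A=B^{g}=(A^{f})^{g}=A^{fg}$, so $A$ belongs to the displayed fixed-point set. Specialising $f$, $g$ and $fg$ gives the first identity of (a), (b) and (c); the intent identities are proved symmetrically, using that for a concept $(A,B)$ the second equation forces $A=B^{g}$ and hence $B=A^{f}=(B^{g})^{f}=B^{gf}$.

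The only structural fact lurking behind this --- needed not for the proposition itself but for the surrounding claims that these operators yield closure or kernel operators and complete lattices --- is that each pair $(f,g)$ is a Galois connection: antitone in case (a), where $A\subseteq B^{-}$, $B\subseteq A^{+}$ and $A\times B\subseteq I$ are all equivalent, and a covariant adjunction in cases (b) and (c), where for instance $A^{\lozenge}\subseteq B$ and $A\subseteq B^{\square^{-1}}$ both assert that $x\in A$ and $xIy$ imply $y\in B$. I would not invoke this for the proof, which is pure substitution. Accordingly there is no real obstacle; the only thing to be careful about is the bookkeeping --- keeping track, in each of the three cases, of which operator maps $G$-sets to $M$-sets and which does the reverse (and, if one prefers to phrase matters via closure/kernel operators, of the antitone-versus-covariant distinction), so that the partner set $A^{f}$, respectively $B^{g}$, is built on the correct side.
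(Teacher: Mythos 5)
Your proof is correct, and since the paper states this proposition without proof (it is standard background from the FCA/RST literature), your unwinding of the definitions --- an $A$ with $A^{fg}=A$ yields the concept $(A,A^{f})$, and conversely any concept $(A,B)$ forces $B=A^{f}$ and hence $A=A^{fg}$, with the symmetric argument for intents --- is exactly the routine verification that is intended. Your side remark that the Galois-connection/adjunction structure is only needed for the lattice claims, not for this fixed-point characterization, is also accurate.
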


It can be shown that the sets $\mathcal{FC}_{ext}, \mathcal{PC}_{ext}$ and $\mathcal{OC}_{ext}$ form complete lattices and are isomorphic to the corresponding concept  lattices. Analogously, the sets $\mathcal{FC}_{int}, \mathcal{PC}_{int}$ and $\mathcal{OC}_{int}$ form complete lattices and are dually isomorphic to the corresponding concept lattices. Therefore, a concept can be identify with its extent or intent. The relationship between these two kinds of rough concept lattices and concept lattices of FCA are investigated in \cite{yao2004comparative}. 
In particular, the following theorem is proved.
\begin{theorem}
    {\rm \cite{yao2004comparative} For a context $\mathbb{K}=(G, M, I)$ and the complemented context $\mathbb{K}^{c}=(G, M, I^{c})$, the following holds.
    \begin{itemize}
        \item[(a)] The concept lattice of $\mathbb{K}$ is isomorphic to the property oriented concept lattice of $\mathbb{K}^{c}$.
        \item[(b)] The property oriented concept lattice of $\mathbb{K}$ is dually  isomorphic to the object oriented concept lattice of $\mathbb{K}$.
        \item[(c)] The concept lattice of $\mathbb{K}$ is dually isomorphic to the object oriented concept lattice of $\mathbb{K}^{c}$.
    \end{itemize}}
\end{theorem}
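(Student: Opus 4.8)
The plan is to reduce all three claims to statements about the operators on $\mathcal{P}(G)$ whose fixed points are described in Proposition~1, and to translate between the operators of $\mathbb{K}$ and those of $\mathbb{K}^{c}$ by set complementation. Since $I^{c}(x)=M\setminus I(x)$ and $(I^{c})^{-1}(y)=G\setminus I^{-1}(y)$, I would first record four pointwise identities, each obtained by merely unfolding the definitions (for instance, $y\in A_{(I^{c})^{-1}}^{\lozenge}$ iff $A\not\subseteq I^{-1}(y)$ iff $y\notin A^{+}$): for all $A\subseteq G$ and $B\subseteq M$,
\[
A_{(I^{c})^{-1}}^{\lozenge}=M\setminus A^{+},\qquad B_{I^{c}}^{\square^{-1}}=(M\setminus B)^{-},\qquad (G\setminus A)_{I^{-1}}^{\lozenge}=M\setminus A_{I^{-1}}^{\square},\qquad (M\setminus B)_{I}^{\square^{-1}}=G\setminus B_{I}^{\lozenge^{-1}}.
\]

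For part (a), composing the first two identities yields, for every $A\subseteq G$,
\[
\bigl(A_{(I^{c})^{-1}}^{\lozenge}\bigr)_{I^{c}}^{\square^{-1}}=\bigl(M\setminus A^{+}\bigr)_{I^{c}}^{\square^{-1}}=\bigl(M\setminus(M\setminus A^{+})\bigr)^{-}=A^{+-}.
\]
By Proposition~1(b) applied to $\mathbb{K}^{c}$, the left-hand side is the operator on $\mathcal{P}(G)$ whose fixed-point set is $\mathcal{PC}_{ext}(\mathbb{K}^{c})$, while by Proposition~1(a) the operator $A\mapsto A^{+-}$ has fixed-point set $\mathcal{FC}_{ext}(\mathbb{K})$. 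Hence $\mathcal{FC}_{ext}(\mathbb{K})=\mathcal{PC}_{ext}(\mathbb{K}^{c})$ as families of subsets of $G$ ordered by inclusion; since every concept lattice is order-isomorphic to its lattice of extents, (a) follows, the concept-level isomorphism being $(A,B)\mapsto(A,M\setminus B)$.

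For part (b), I would show that the complementation map $A\mapsto G\setminus A$ is an inclusion-reversing bijection of $\mathcal{OC}_{ext}(\mathbb{K})$ onto $\mathcal{PC}_{ext}(\mathbb{K})$. Using the last two identities of the display above,
\[
\bigl((G\setminus A)_{I^{-1}}^{\lozenge}\bigr)_{I}^{\square^{-1}}=\bigl(M\setminus A_{I^{-1}}^{\square}\bigr)_{I}^{\square^{-1}}=G\setminus\bigl(A_{I^{-1}}^{\square}\bigr)_{I}^{\lozenge^{-1}}=G\setminus A^{\square\lozenge^{-1}},
\]
so that $A=A^{\square\lozenge^{-1}}$ holds iff $G\setminus A=(G\setminus A)^{\lozenge\square^{-1}}$ holds; by Proposition~1(b),(c) this says exactly that $A\mapsto G\setminus A$ restricts to a bijection $\mathcal{OC}_{ext}(\mathbb{K})\to\mathcal{PC}_{ext}(\mathbb{K})$, which is manifestly order-reversing, so the two concept lattices are dually isomorphic. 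Part (c) then follows by composition: by (a), $\mathcal{FC}(\mathbb{K})$ is isomorphic to $\mathcal{PC}(\mathbb{K}^{c})$, and by (b) applied to the context $\mathbb{K}^{c}$, $\mathcal{PC}(\mathbb{K}^{c})$ is dually isomorphic to $\mathcal{OC}(\mathbb{K}^{c})$; an isomorphism followed by a dual isomorphism is a dual isomorphism.

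The calculations are routine, so the only genuine obstacle is bookkeeping: keeping careful track of whether each complement is taken inside $G$ or inside $M$, and verifying that every bijection exhibited on extents is monotone or antitone as claimed, so that it lifts to an isomorphism or dual isomorphism of concept lattices. This last point uses the facts recalled after Proposition~1 — that a concept lattice is order-isomorphic to its lattice of extents (under $\subseteq$) and dually order-isomorphic to its lattice of intents — which is what reduces each clause of the theorem to the corresponding (anti)isomorphism of extent lattices established above.
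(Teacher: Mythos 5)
Your proposal is correct: the four complementation identities all check out, the compositions $\bigl(A_{(I^{c})^{-1}}^{\lozenge}\bigr)_{I^{c}}^{\square^{-1}}=A^{+-}$ and $\bigl((G\setminus A)_{I^{-1}}^{\lozenge}\bigr)_{I}^{\square^{-1}}=G\setminus A^{\square\lozenge^{-1}}$ are computed correctly, and reducing each clause to an (anti)isomorphism of extent lattices is legitimate because the paper explicitly records that each concept lattice is isomorphic to its lattice of extents. Note, however, that the paper itself offers no proof of this statement: it is quoted from Yao's comparative study \cite{yao2004comparative}, so there is no in-paper argument to compare against line by line. The closest internal analogue is the logical counterpart proved later (Proposition~\ref{mapconcept} together with the final theorem of Section~\ref{lattice}), where the same three correspondences are established for pairs of formulas rather than pairs of sets: there the role of your set complementations is played by negation of formulas and by the translation $\rho$ (which rewrites $\boxminus$ as $\square\neg$ and is interpreted over the complemented frame), and the isomorphisms are exhibited on equivalence classes such as $[(\phi,\psi)]\mapsto[(\rho(\phi),\neg\rho(\psi))]$ — the exact formula-level image of your concept-level map $(A,B)\mapsto(A,M\setminus B)$. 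So your argument is a clean, self-contained proof of the cited set-theoretic theorem, and it runs structurally parallel to the paper's own proof strategy for the logical version; what the paper's route buys additionally is that the correspondence is established syntactically (via Theorem~\ref{translation}) and hence transfers to the quotient lattices of formulas, while your route buys a direct, elementary verification at the level of contexts without any logical machinery.
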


In addition, to deal with the negation of  concept,  the notions of {\it semiconcepts} and {\it protoconcepts} are introduced in \cite{wille}. Algebraic studies of these notions led to the definition of double Boolean algebras and pure double Boolean algebras \cite{wille}. These structures have been investigated by many authors \cite{wille,vormbrock2005semiconcept,BALBIANI2012260,MR4566932}. There is also study of logic corresponding to these algebraic structures \cite{howlader2021dbalogic,HOWLADER2023115}. 

The operators used in formal and rough concepts correspond to modalities used in modal logic  \cite{Gargov1987,blackburn2002moda}. In particular, the operator used in FCA is the window modality (sufficiency operator) \cite{Gargov1987} and those used in RST are box (necessity operator) and diamond (possibility operator) \cite{blackburn2002moda}. Furthermore, a context is a two-sorted structure consisting of a set of objects and a set of properties. Considering these facts, our goal in this work is to formulate two-sorted modal logics that are sound and complete with respect to the class of all contexts and can represent all the three kinds of concepts and their lattices.

To achieve the goal, we first introduce the notion of {\it two-sorted bidirectional frame}, which is simply a formal context extended with the converse of the binary relation. 
Then, we propose two-sorted modal logics \textbf{KB} and \textbf{KF} as representation formalism for rough and formal concepts respectively, and two-sorted bidirectional frames serve as semantic models of the logics. We also prove the soundness and completeness of the proposed logics with respect to the semantic models.

Next, we will review basic definitions and main results of general many-sorted polyadic modal logic. Then, in Section \ref{KB}, we define the logic \textbf{KB} and  characterize the pairs of formula that represent property and object oriented concepts of context. The logic \textbf{KF} and formal concept are discussed in Section \ref{KF}. We revisit the three concept lattices and their relations in terms of logic in Section \ref{lattice}. Finally, we summarize the paper and indicate directions of future work in Section \ref{conclusion}.

\subsection{Many-sorted polyadic modal logic} \label{mspml}
The many-sorted polyadic modal logic is introduced in \cite{mspml}. The alphabet of the logic consists of a many-sorted signature $(S, \Sigma)$, where $S$ is the collection of sorts and $\Sigma$ is the set of modalities, and an {\it $S$-indexed} family $P:=\{P_{s}\}_{s\in S}$ of propositional variables, where $P_{s}\neq \emptyset$ and $P_{s}\cap P_{t}= \emptyset$ for distinct $s, t\in S$. Each modality $\sigma\in\Sigma$ is associated with an arity $s_{1}s_{2}\ldots s_{n}\rightarrow s$. For any $n\in \mathbb{N}$, we denote  $\Sigma_{s_{1}s_{2}\ldots s_{n}s}=\{\sigma\in \Sigma\mid \sigma:s_{1}s_{2}\ldots s_{n}\rightarrow s\}$

For an $(S, \Sigma)$-modal language $\mathcal{ML}_{S}$, the set of formulas  is an $S$-index family $Fm_{S}:=\{Fm_{s}\mid s\in S\}$, defined inductively for each $s\in S$ by
\[\phi_s::= p_s\;\mid\;\neg\phi_s\;\mid\;\phi_s\wedge\phi_s\;\mid\;\sigma(\phi_{s_1}\ldots \phi_{s_n})\;\mid\;\sigma^{\square}(\phi_{s_1}\ldots \phi_{s_n}),\]
where $p_s\in P_s$ and $\sigma\in \Sigma_{s_{1}s_{2}\ldots s_{n}s}$.

A {\it many-sorted relational frame} is a pair  $\mathfrak{F}:=(\{W_{s}\}_{s\in S}, \{R_{\sigma}\}_{\sigma\in \Sigma})$ where $W_{s}\neq \emptyset$, $W_{s_{i}}\cap W_{s_{j}}=\emptyset$ for  $s, s_{i}\not= s_{j}\in S$ and $R_{\sigma}\subseteq W_{s}\times W_{s_{1}}\ldots \times W_{s_{n}}$  if  $\sigma \in \Sigma_{s_{1}s_{2}\ldots s}$. The class of all  many-sorted relational frames is denoted as $\mathbb{SRF}$. A {\it valuation} $v$ is an $S$-indexed family of  maps $\{v_{s}\}_{s\in S}$, where $v_{s}: P_{s}\rightarrow \mathcal{P}(W_{s})$. A  many-sorted model $\mathfrak{M}:=(\mathfrak{F}, v)$ consists of a many-sorted frame $\mathfrak{F}$ and a valuation $v$. The satisfaction of a  formula in a model $\mathfrak{M}$ is defined inductively as follows.

\begin{definition}
\label{satisfiction}
{\rm Let $\mathfrak{M}:=(\{W_{s}\}_{s\in S}, \{R_{\sigma}\}_{\sigma\in \Sigma}, v)$ be a  many-sorted model,  $w\in W_{s}$ and $\phi \in Fm_{s}$  for $s\in S$. We define $\mathfrak{M}, w\models_{s} \phi$  by induction over $\phi$ as follows:
\begin{enumerate}
    \item $\mathfrak{M},w\models_{s} p$ iff $w\in v_{s}(p)$
    \item $\mathfrak{M},w\models_{s}\neg\phi$ iff $\mathfrak{M},w\not\models_{s}\phi$
    \item $\mathfrak{M},w\models_{s} \phi_{1}\wedge \phi_{2}$ iff $\mathfrak{M},w\models_{s}\phi_{1}$ and $\mathfrak{M},w\models_{s}\phi_{2}$
    \item If $\sigma\in \Sigma_{s_{1}s_{2}\ldots s}$, then $\mathfrak{M},w\models_{s} \sigma (\phi_{1}, \phi_{2}\ldots \phi_{n})$ iff there is $(w_{1}, w_{2}\ldots w_{n})\in W_{s_{1}}\times W_{s_{2}}\ldots W_{s_{n}}$ such that $(w,w_{1}, w_{2}\ldots w_{n})\in R_{\sigma}$ and $\mathfrak{M},w_{i}\models_{s_{i}} \phi_{i}$ for $i\in \{1, 2\ldots n\}$
\end{enumerate}

    }
\end{definition}

\begin{definition}{\rm \cite{mspml} Let $\mathfrak{M}$ be an $(S,\Sigma)$-model. Then, for a set $\Phi_{s}$ of formula, $\mathfrak{M},w\models_{s}\Phi_{s}$ if $\mathfrak{M},w\models_{s}\phi$ for all $\phi\in\Phi_{s}$.

Let $\mathcal{C}$ be a class of models. Then, for a set $\Phi_s\cup\{\phi\}\subseteq Fm_{s}$, $\phi$ is a local semantic consequence of $\Phi_{s}$ over $\mathcal{C}$ and denoted as $\Phi_{s}\models^{\mathcal{C}}_{s}\phi$  if $\mathfrak{M}, w\models_{s}\Phi_{s}$ implies $\mathfrak{M}, w\models_{s}\phi$ for all models $\mathfrak{M}\in\mathcal{C}$. If $\mathcal{C}$ is the class of all models, we omit the superscript and denote it as $\Phi_{s}\models_{s}\phi$.

If $\Phi_{s}$ is empty, we say $\phi$ is valid in  $\mathcal{C}$ and denoted it as $\mathcal{C}\models_{s}\phi$. When $\mathcal{C}$ is the class of all models based on a given frame $\mathfrak{F}$, we also denote it by $\mathfrak{F}\models_{s}\phi$.}
\end{definition}

To characterize the local semantic consequence, the modal system $\mathbf{K}_{(S, \Sigma)}:=\{\mathbf{K}_s\}_{s\in S}$ is proposed in \cite{mspml}, where $\mathbf{K}_s$ is the axiomatic system in Figure~\ref{fig1} in which $\sigma\in\Sigma_{{s_1}\ldots s_n,s}$:
\begin{figure}[htp]
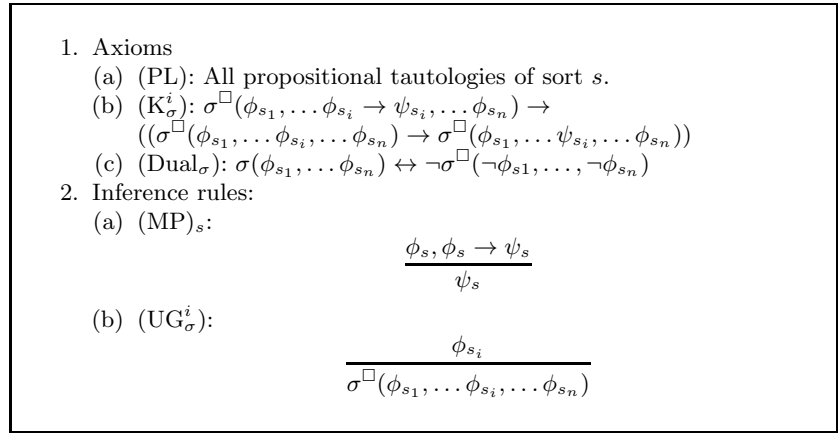
\centering
	\framebox[110mm] {\parbox{100mm}{\begin{enumerate}
\item Axioms
\begin{enumerate}
    \item (PL): All propositional tautologies of sort $s$.
    \item (K$_{\sigma}^{i}$): $\sigma^{\square}(\phi_{s_1}, \ldots \phi_{s_i}\rightarrow \psi_{s_i}, \ldots \phi_{s_n})\rightarrow\\( (\sigma^{\square}(\phi_{s_1}, \ldots \phi_{s_i}, \ldots \phi_{s_n}) \rightarrow \sigma^{\square}(\phi_{s_1}, \ldots \psi_{s_i}, \ldots \phi_{s_n})) $
    \item (Dual$_{\sigma}$): $\sigma (\phi_{s_1}, \ldots \phi_{s_n})\leftrightarrow\neg \sigma^{\square}(\neg\phi_{s1}, \ldots, \neg\phi_{s_n})$
\end{enumerate}
\item  Inference rules:
\begin{enumerate}
    \item (MP)$_{s}$: \[\infer{\psi_s}{\phi_s, \phi_s\rightarrow \psi_s}\]
    \item (UG$^{i}_{\sigma})$: \[\infer{\sigma^{\square}(\phi_{s_1}, \ldots \phi_{s_i}, \ldots \phi_{s_n})}{\phi_{s_i}}\]
\end{enumerate}
\end{enumerate}
}} \caption{The axiomatic system $\mathbf{K}_s$}\label{fig1}
\end{figure}

When the signature is clear from the context, the subscripts may
be omitted and we simply write the system as $\mathbf{K}$.
\begin{definition}
    {\rm \cite{mspml} Let $\Lambda\subseteq Fm_S$ be an $S$-sorted  set of formulas. The normal modal logic defined by  $\Lambda$ is $\mathbf{K}\Lambda:=\{\mathbf{K}\Lambda_s\}_{s\in S}$ where $\mathbf{K}\Lambda_s:=\mathbf{K}_s\cup\{\lambda^{\prime}\in Fm_s \mid \lambda^{\prime}$ is obtained by uniform substitution applied to a formula $\lambda\in \Lambda_s\}$.}
\end{definition}

\begin{definition}{\rm \cite{mspml}
A sequence of formulas $\phi_{1}, \phi_{2}, \ldots\phi_{n}$ is called a $\mathbf{K}\Lambda$-proof for the formula $\phi$  if  $\phi_{n}=\phi$ and  $\phi_{i}$ is in $\mathbf{K}\Lambda_{s_{i}}$ or inferred from $\phi_{1}, \ldots, \phi_{i-1}$ using modus pones and universal generalization.  If $\phi$ has a proof in $\mathbf{K}\Lambda$,  we say that $\phi$ is a theorem and write $\vdash_{s}^{\mathbf{K}\Lambda}\phi$.
Let $\Phi\cup\{\phi\}\subseteq Fm_s$ be a set of formulas. Then, we say that $\phi$ is provable form $\Phi$, denoted by $\Phi\vdash_{s}^{\mathbf{K}\Lambda}\phi$, if there exist $\phi_1,\ldots,\phi_n\in\Phi$ such that $\vdash_{s}^{\mathbf{K}\Lambda}(\phi_1\wedge\ldots\wedge\phi_n)\rightarrow\phi$. In addition, the set $\Phi$ is $\mathbf{K}\Lambda$-inconsistent  if $\bot$ is provable from it, otherwise it is $\mathbf{K}\Lambda$-consistent.}
\end{definition}
\begin{proposition}
\label{suffconstrongcomp}
   {\rm  \cite{mspml} $\mathbf{K}\Lambda$  is strongly complete with respect to a class of models $\mathcal{C}$ if and only if any  consistent set $\Gamma$ of formulas is satisfied in some model from  $\mathcal{C}$.}
\end{proposition}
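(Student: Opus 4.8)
The plan is to prove the two directions separately, in both cases arguing by contraposition and using the routine interplay between consistency and satisfiability. Throughout, fix a sort $s$; since $\vdash_s^{\mathbf{K}\Lambda}$, $\models_s$ and $\models_s^{\mathcal{C}}$ all operate within one sort, the sorted setting introduces nothing beyond bookkeeping.

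For the direction from right to left, assume every $\mathbf{K}\Lambda$-consistent set of formulas is satisfied in some model from $\mathcal{C}$, and let $\Phi_s\cup\{\phi\}\subseteq Fm_s$ with $\Phi_s\not\vdash_s^{\mathbf{K}\Lambda}\phi$; I must exhibit a model in $\mathcal{C}$ and a world witnessing $\Phi_s\not\models_s^{\mathcal{C}}\phi$. The key intermediate step is the propositional lemma that $\Phi_s\cup\{\neg\phi\}$ is $\mathbf{K}\Lambda$-consistent: were it inconsistent, the definition of provability-from-a-set would give $\psi_1,\ldots,\psi_m\in\Phi_s\cup\{\neg\phi\}$ with $\vdash_s^{\mathbf{K}\Lambda}(\psi_1\wedge\cdots\wedge\psi_m)\rightarrow\bot$, and a few applications of (PL) and (MP)$_s$ would then isolate the conjuncts lying in $\Phi_s$ and yield $\vdash_s^{\mathbf{K}\Lambda}(\chi_1\wedge\cdots\wedge\chi_k)\rightarrow\phi$, contradicting $\Phi_s\not\vdash_s^{\mathbf{K}\Lambda}\phi$. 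Applying the hypothesis to the consistent set $\Phi_s\cup\{\neg\phi\}$ produces $\mathfrak{M}\in\mathcal{C}$ and $w$ with $\mathfrak{M},w\models_s\Phi_s$ and $\mathfrak{M},w\models_s\neg\phi$; by clause~2 of Definition~\ref{satisfiction} the latter is exactly $\mathfrak{M},w\not\models_s\phi$, so $\Phi_s\not\models_s^{\mathcal{C}}\phi$. Taking the contrapositive establishes strong completeness.

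For the converse, assume $\mathbf{K}\Lambda$ is strongly complete with respect to $\mathcal{C}$ and let $\Gamma\subseteq Fm_s$ be $\mathbf{K}\Lambda$-consistent. Suppose toward a contradiction that $\Gamma$ is satisfied in no model from $\mathcal{C}$; then there is no $\mathfrak{M}\in\mathcal{C}$ and world $w$ with $\mathfrak{M},w\models_s\Gamma$, so the defining implication of local semantic consequence holds vacuously and $\Gamma\models_s^{\mathcal{C}}\bot$. Strong completeness then gives $\Gamma\vdash_s^{\mathbf{K}\Lambda}\bot$, i.e.\ $\Gamma$ is inconsistent, a contradiction. Hence $\Gamma$ is satisfied in some model from $\mathcal{C}$.

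I do not expect a genuine obstacle: the argument is the standard one. The only place demanding care is the consistency lemma in the first direction, where one must move between ``$\Phi_s\cup\{\neg\phi\}$ proves $\bot$'' and ``$\Phi_s$ proves $\phi$'' using only the finite-conjunction formulation of $\vdash_s^{\mathbf{K}\Lambda}$ from a set together with the propositional axioms (PL) and the rule (MP)$_s$ available at sort $s$; since all these manipulations remain within the single sort $s$, the many-sorted apparatus plays no essential role.
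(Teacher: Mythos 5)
Your proof is correct and is exactly the standard argument: the paper itself gives no proof of this proposition, citing it from \cite{mspml}, and your two contraposition arguments (consistency of $\Phi_s\cup\{\neg\phi\}$ when $\Phi_s\not\vdash_s^{\mathbf{K}\Lambda}\phi$, and the vacuous-consequence argument via $\Gamma\models_s^{\mathcal{C}}\bot$) are precisely the routine proof one finds in the cited source and in textbook treatments. No gaps worth noting beyond the trivial edge case of empty $\Phi_s$, which the usual empty-conjunction convention handles.
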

\begin{definition}
    \label{canonicalmodel}
    {\rm \cite{mspml} The canonical model is \[\mathfrak{M}^{\mathbf{K}\Lambda}:=(\{W_{s}^{\mathbf{K}\Lambda}\}_{s\in S}, \{R_{\sigma}^{\mathbf{K}\Lambda}\}_{\sigma\in\Sigma}, V^{\mathbf{K}\Lambda})\] where
    \begin{itemize}
        \item[(a)] for any $s\in S$,  $W_{s}^{\mathbf{K}\Lambda}=\{\Phi\subseteq Fm_{s}~\mid~ \Phi ~\mbox{is maximally} ~\mathbf{K}\Lambda\mbox{-consistent}\}$,
        \item[(b)] for any $\sigma\in \Sigma_{s_{1}\ldots s_{n},s}$, $w\in W_{s}^{\mathbf{K}\Lambda}, u_{1}\in W_{s_{1}}^{\mathbf{K}\Lambda},\ldots u_{n}\in W_{s_{n}}^{\mathbf{K}\Lambda} $, $R_{\sigma}^{\mathbf{K}\Lambda}wu_{1}\ldots u_{n}$ if and only if $(\psi_{1}, \ldots,\psi_{n})\in u_{1}\times u_{2}\times\ldots \times u_{n}$ implies that $\sigma(\psi_{1}, \ldots, \psi_{n})\in w$.
        \item[(c)] $V^{\mathbf{K}\Lambda}=\{V^{\mathbf{K}\Lambda}_{s}\}$ is the valuation defined by $V_{s}^{\mathbf{K}\Lambda}(p)=\{w\in W_{s}^{\mathbf{K}\Lambda}~\mid~ p\in w\}$ for any $s\in S$ and $p\in P_{s}$.
    \end{itemize}}
\end{definition}

\begin{lemma}
\label{truthlemma}
    {\rm \cite{mspml} If $s\in S$, $\phi\in Fm_{s}$, $\sigma \in \Sigma_{s_{1}\ldots s_{n}, s}$ and $w\in W_{s}^{\mathbf{K}\Lambda}$ then the following hold:
    \begin{itemize}
        \item[(a)] $R_{\sigma}^{\mathbf{K}\Lambda}wu_{1}\ldots u_{n}$ if and only if for any formulas $\psi_{1}, \ldots,\psi_{n}$, $\sigma^{\square}(\psi_{1}, \ldots,\psi_{n})\in w$ implies $\psi_{i}\in u_{i}$ for some $i\in \{1, 2,\ldots, n\}$.
        \item[(b)] If $\sigma(\psi_{1}, \ldots,\psi_{n})\in w$ then for any $i\in \{1,2\ldots , n\}$ there is $u_{i}\in W_{s_{i}}^{\mathbf{K}\Lambda}$ such that $\psi_{1}\in u_{1},\ldots, \psi_{n}\in u_{n}$ and $R_{\sigma}^{\mathbf{K}\Lambda}wu_{1}\ldots u_{n}$.
        \item[(c)] $\mathfrak{M}^{\mathbf{K}\Lambda}, w\models_{s}\phi$ if and only if $\phi\in w$.
    \end{itemize}}
\end{lemma}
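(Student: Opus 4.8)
The plan is to run the standard canonical-model argument, specialised to the many-sorted polyadic signature. Before anything else I would record the derived facts about $\mathbf{K}\Lambda$ that will be used repeatedly: since $\mathbf{K}\Lambda_s\supseteq\mathbf{K}_s$, the axiom (K$_\sigma^i$) together with the rule (UG$_\sigma^i$) make $\sigma^{\square}$ — hence, via (Dual$_\sigma$), also $\sigma$ — monotone in every argument, make $\sigma^{\square}$ distribute over $\wedge$ and $\sigma$ distribute over $\vee$ in every argument, and make $\sigma(\ldots,\bot,\ldots)$ provably equivalent to $\bot$; and every $w\in W_s^{\mathbf{K}\Lambda}$ is deductively closed, closed under conjunction, and decides every formula of sort $s$. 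Given these, part (a) is immediate by contraposition from (Dual$_\sigma$). For $(\Rightarrow)$: if $R_\sigma^{\mathbf{K}\Lambda}wu_1\ldots u_n$ and $\sigma^{\square}(\psi_1,\ldots,\psi_n)\in w$ but $\psi_i\notin u_i$ for every $i$, then $\neg\psi_i\in u_i$ for every $i$, so Definition~\ref{canonicalmodel}(b) gives $\sigma(\neg\psi_1,\ldots,\neg\psi_n)\in w$; but by (Dual$_\sigma$) this formula is equivalent to $\neg\sigma^{\square}(\psi_1,\ldots,\psi_n)$, contradicting the consistency of $w$. For $(\Leftarrow)$: assume the right-hand side, and let $\chi_i\in u_i$ for all $i$; if $\sigma(\chi_1,\ldots,\chi_n)\notin w$ then $\neg\sigma(\chi_1,\ldots,\chi_n)\in w$, which by (Dual$_\sigma$) is equivalent to $\sigma^{\square}(\neg\chi_1,\ldots,\neg\chi_n)\in w$, so the right-hand side yields $\neg\chi_i\in u_i$ for some $i$, contradicting $\chi_i\in u_i$; hence $R_\sigma^{\mathbf{K}\Lambda}wu_1\ldots u_n$ by Definition~\ref{canonicalmodel}(b).

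Part (b) is the existence lemma, and it is the step I expect to be the main obstacle, because in the polyadic setting the $n$ witness worlds $u_1,\ldots,u_n$ must be constructed together: building them one coordinate at a time breaks down, since distributivity of $\sigma$ over $\vee$ in one argument does not interact well with a universal quantifier over the worlds already chosen in the other arguments. Instead I would proceed by a single simultaneous Lindenbaum construction. Assume $\sigma(\psi_1,\ldots,\psi_n)\in w$. Enumerate all formulas that can appear in any of the argument slots, tagging each with its slot (if a sort occurs several times among $s_1,\ldots,s_n$, that sort's formulas are listed once per occurrence), say as a list $\theta_1,\theta_2,\ldots$. Build increasing finite sets $\Delta_i^{(0)}\subseteq\Delta_i^{(1)}\subseteq\cdots$ with $\Delta_i^{(0)}=\{\psi_i\}$, maintaining the invariant $\sigma\bigl(\bigwedge\Delta_1^{(k)},\ldots,\bigwedge\Delta_n^{(k)}\bigr)\in w$. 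At stage $k+1$, if $\theta_{k+1}$ is tagged with slot $i$, write $\alpha=\bigwedge\Delta_i^{(k)}$; since $\alpha$ is provably equivalent to $(\alpha\wedge\theta_{k+1})\vee(\alpha\wedge\neg\theta_{k+1})$ and $\sigma$ distributes over $\vee$ in the $i$-th argument, the invariant together with maximality of $w$ forces one of $\sigma(\ldots,\alpha\wedge\theta_{k+1},\ldots)$, $\sigma(\ldots,\alpha\wedge\neg\theta_{k+1},\ldots)$ to lie in $w$; put $\theta_{k+1}$ resp. $\neg\theta_{k+1}$ into $\Delta_i^{(k+1)}$ accordingly, leaving the other slots unchanged. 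Set $u_i:=\bigcup_k\Delta_i^{(k)}$. Each $u_i$ decides every formula of sort $s_i$ by construction, and is consistent because any of its finite subsets is implied by some $\bigwedge\Delta_i^{(k)}$, which is consistent — were it not, $\sigma(\ldots,\bigwedge\Delta_i^{(k)},\ldots)$ would be provably equivalent to $\bot$, contradicting the invariant and consistency of $w$ — so $u_i\in W_{s_i}^{\mathbf{K}\Lambda}$, and $\psi_i\in u_i$. Finally, if $(\chi_1,\ldots,\chi_n)\in u_1\times\cdots\times u_n$, choose $k$ with $\chi_i\in\Delta_i^{(k)}$ for all $i$; then $\bigwedge\Delta_i^{(k)}$ implies $\chi_i$ for each $i$, and monotonicity of $\sigma$ plus the invariant give $\sigma(\chi_1,\ldots,\chi_n)\in w$, which is exactly $R_\sigma^{\mathbf{K}\Lambda}wu_1\ldots u_n$ by Definition~\ref{canonicalmodel}(b).

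With (a) and (b) in hand, part (c) is a routine induction on $\phi\in Fm_s$. The atomic case is the definition of $V^{\mathbf{K}\Lambda}$, and the cases $\neg\phi$ and $\phi_1\wedge\phi_2$ use only that every element of $W_s^{\mathbf{K}\Lambda}$ is maximally consistent. For $\phi=\sigma(\phi_1,\ldots,\phi_n)$: if $\phi\in w$, part (b) produces $u_1,\ldots,u_n$ with $\phi_i\in u_i$ and $R_\sigma^{\mathbf{K}\Lambda}wu_1\ldots u_n$, so the induction hypothesis and clause~4 of Definition~\ref{satisfiction} give $\mathfrak{M}^{\mathbf{K}\Lambda},w\models_s\phi$; conversely, if $\mathfrak{M}^{\mathbf{K}\Lambda},w\models_s\phi$ there are $u_1,\ldots,u_n$ with $R_\sigma^{\mathbf{K}\Lambda}wu_1\ldots u_n$ and $\mathfrak{M}^{\mathbf{K}\Lambda},u_i\models_{s_i}\phi_i$, whence $\phi_i\in u_i$ for all $i$ by the induction hypothesis and $\sigma(\phi_1,\ldots,\phi_n)\in w$ by Definition~\ref{canonicalmodel}(b). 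The case $\phi=\sigma^{\square}(\phi_1,\ldots,\phi_n)$ reduces to the $\sigma$-case via (Dual$_\sigma$), since $\sigma^{\square}(\phi_1,\ldots,\phi_n)$ is $\mathbf{K}\Lambda$-equivalent to $\neg\sigma(\neg\phi_1,\ldots,\neg\phi_n)$ and its satisfaction condition is the De Morgan dual of clause~4. The only genuinely delicate point in the whole proof is setting up the simultaneous construction in (b) with the right invariant; everything else is bookkeeping with maximal consistent sets.
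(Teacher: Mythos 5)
This lemma is not proved in the paper at all --- it is imported verbatim from the cited reference \cite{mspml} --- so there is no in-paper proof to compare with; judged on its own, your argument is the standard canonical-model proof (the duality computation for (a), a simultaneous Lindenbaum-style construction of the $n$ witnesses for the polyadic existence lemma (b), and induction for (c)), and it is correct. The one step to tidy is the $\sigma^{\square}$ case of (c): since $\sigma(\neg\phi_1,\ldots,\neg\phi_n)$ is not of smaller complexity than $\sigma^{\square}(\phi_1,\ldots,\phi_n)$, you cannot literally re-invoke the $\sigma$-case of the induction on it; instead argue directly, using (a) to pass from $\sigma^{\square}(\phi_1,\ldots,\phi_n)\in w$ to satisfaction at all $R_{\sigma}^{\mathbf{K}\Lambda}$-successors, and, for the converse, from $\sigma^{\square}(\phi_1,\ldots,\phi_n)\notin w$ infer $\sigma(\neg\phi_1,\ldots,\neg\phi_n)\in w$ by (Dual$_{\sigma}$) and apply (b) to it, invoking the induction hypothesis only on $\phi_1,\ldots,\phi_n$ (your reading of the omitted satisfaction clause for $\sigma^{\square}$ as the De Morgan dual of clause 4 of Definition~\ref{satisfiction} is the intended one).
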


 \begin{proposition}
 \label{compcanonic}
     {\rm \cite{mspml} If $\Phi_s$ is a $\mathbf{K}\Lambda$-consistent set of formulas then it is satisfied in  the canonical model. }
 \end{proposition}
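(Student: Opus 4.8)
The plan is to run the standard Henkin-style argument adapted to the many-sorted setting: enlarge $\Phi_s$ to a maximally $\mathbf{K}\Lambda$-consistent set, note that this set is a point of the canonical model, and then read off satisfaction from the truth lemma.

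First I would establish a Lindenbaum lemma for the sort $s$, namely that every $\mathbf{K}\Lambda$-consistent $\Phi_s\subseteq Fm_s$ is contained in some maximally $\mathbf{K}\Lambda$-consistent $w\subseteq Fm_s$. Fixing an enumeration $\psi_0,\psi_1,\ldots$ of $Fm_s$ (or, if $Fm_s$ is uncountable, applying Zorn's lemma to the poset of $\mathbf{K}\Lambda$-consistent supersets of $\Phi_s$ ordered by inclusion — a union of a chain of consistent sets is consistent since, by definition, inconsistency is witnessed by a finite conjunction of members), I would set $\Phi^0:=\Phi_s$ and recursively put $\Phi^{n+1}:=\Phi^n\cup\{\psi_n\}$ if this set is $\mathbf{K}\Lambda$-consistent and $\Phi^{n+1}:=\Phi^n\cup\{\neg\psi_n\}$ otherwise. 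The key point to verify is that this recursion preserves consistency: if both $\Phi^n\cup\{\psi_n\}$ and $\Phi^n\cup\{\neg\psi_n\}$ were inconsistent, then by the definition of provability there would be finite conjunctions $\chi,\chi'$ of members of $\Phi^n$ with $\vdash^{\mathbf{K}\Lambda}_s\chi\to\neg\psi_n$ and $\vdash^{\mathbf{K}\Lambda}_s\chi'\to\psi_n$, whence (PL) together with (MP)$_s$ gives $\vdash^{\mathbf{K}\Lambda}_s(\chi\wedge\chi')\to\bot$, contradicting the consistency of $\Phi^n$. Then $w:=\bigcup_{n}\Phi^n$ is $\mathbf{K}\Lambda$-consistent (a derivation of $\bot$ from $w$ uses only finitely many premises, all of which lie in some $\Phi^n$), maximal (it contains $\psi_n$ or $\neg\psi_n$ for every $n$), and contains $\Phi_s$.

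It then remains only to conclude. By the Lindenbaum lemma $w\in W_s^{\mathbf{K}\Lambda}$, so $w$ is a point of the canonical model $\mathfrak{M}^{\mathbf{K}\Lambda}$ of Definition~\ref{canonicalmodel}, and by Lemma~\ref{truthlemma}(c) we have $\mathfrak{M}^{\mathbf{K}\Lambda},w\models_s\phi$ iff $\phi\in w$ for every $\phi\in Fm_s$. Since $\Phi_s\subseteq w$, this gives $\mathfrak{M}^{\mathbf{K}\Lambda},w\models_s\phi$ for every $\phi\in\Phi_s$, i.e. $\mathfrak{M}^{\mathbf{K}\Lambda},w\models_s\Phi_s$, which is precisely the assertion that $\Phi_s$ is satisfied in the canonical model. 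Combined with Proposition~\ref{suffconstrongcomp}, this also yields strong completeness of $\mathbf{K}\Lambda$ with respect to the class of all many-sorted models.

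I expect the only genuine obstacle to be the Lindenbaum lemma, and within it the sole step that is more than bookkeeping is the verification that the recursion preserves consistency — a purely propositional manipulation relying on nothing about the modalities beyond the facts that $\mathbf{K}\Lambda_s$ contains classical propositional logic of sort $s$ and is closed under (MP)$_s$. All the modal content — the behaviour of $R_\sigma^{\mathbf{K}\Lambda}$ and the truth lemma itself — is already supplied by the results cited above, so no further manipulation of the modalities is needed here.
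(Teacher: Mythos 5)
Your argument is correct: the Lindenbaum extension (with the propositional consistency-preservation step done properly under the paper's finite-conjunction notion of provability) followed by an appeal to Lemma~\ref{truthlemma}(c) is exactly the standard canonical-model argument, which is what the cited source \cite{mspml} uses; the paper itself states Proposition~\ref{compcanonic} without proof. No gaps to report.
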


These results implies the soundness and completeness of $\mathbf{K}$ directly.
\begin{theorem}\label{mainsoundcomplete}
 {\rm  $\mathbf{K}$ is sound and strongly complete with respect to the class of all $(S,\Sigma)$-models, that is, for any $s\in S$, $\phi\in Fm_{s}$  and $\Phi_{s}\subseteq Fm_{s}$, $\Phi_{s}\vdash_{s}^{\mathbf{K}}\phi$ if and only if $\Phi_{s}\models_{s}\phi$. }
\end{theorem}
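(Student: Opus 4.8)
The plan is to prove the two directions separately: soundness by induction on the length of a proof, and strong completeness by assembling the canonical-model machinery recorded above.

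For soundness I would show that $\Phi_{s}\vdash_{s}^{\mathbf{K}}\phi$ implies $\Phi_{s}\models_{s}\phi$. By the deduction-theorem form of provability it suffices to prove that every $\mathbf{K}$-theorem is valid on every $(S,\Sigma)$-model, since $\Phi_{s}\vdash_{s}^{\mathbf{K}}\phi$ amounts to $\vdash_{s}^{\mathbf{K}}(\phi_{1}\wedge\cdots\wedge\phi_{n})\rightarrow\phi$ for some $\phi_{1},\ldots,\phi_{n}\in\Phi_{s}$. The core is the base case: each axiom in Figure~\ref{fig1} is valid. (PL) holds because the Boolean connectives are read classically in Definition~\ref{satisfiction}; (Dual$_{\sigma}$) holds essentially by definition, because Definition~\ref{satisfiction} only gives a satisfaction clause for the diamond-type $\sigma$, and that clause together with the Boolean clauses makes $\sigma^{\square}(\phi_{s_{1}},\ldots,\phi_{s_{n}})$ equivalent to $\neg\sigma(\neg\phi_{s_{1}},\ldots,\neg\phi_{s_{n}})$; and (K$_{\sigma}^{i}$) follows by unfolding the resulting derived truth condition for $\sigma^{\square}$ and a short argument over tuples related by $R_{\sigma}$. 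I would then check that (MP)$_{s}$ preserves truth at a point and that (UG$^{i}_{\sigma}$) preserves validity (not merely truth at a point), and conclude by induction on proof length that $\models_{s}(\phi_{1}\wedge\cdots\wedge\phi_{n})\rightarrow\phi$, hence $\Phi_{s}\models_{s}\phi$.

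For completeness, nearly everything is already packaged in the cited results. Taking $\Lambda=\emptyset$ so that $\mathbf{K}\Lambda=\mathbf{K}$, Proposition~\ref{compcanonic} gives that every $\mathbf{K}$-consistent set $\Phi_{s}$ is satisfied in the canonical model $\mathfrak{M}^{\mathbf{K}}$ of Definition~\ref{canonicalmodel}, and this canonical model is itself a legitimate $(S,\Sigma)$-model: its sort-$s$ worlds are the maximal $\mathbf{K}$-consistent subsets of $Fm_{s}$, which are nonempty (as $\mathbf{K}$ is consistent) and pairwise disjoint across sorts (since the $Fm_{s}$ are), and each $R_{\sigma}^{\mathbf{K}}$ has the required arity. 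Hence every $\mathbf{K}$-consistent set is satisfiable in the class $\mathcal{C}$ of all $(S,\Sigma)$-models, and Proposition~\ref{suffconstrongcomp} upgrades this to strong completeness. In consequence form: if $\Phi_{s}\not\vdash_{s}^{\mathbf{K}}\phi$ then $\Phi_{s}\cup\{\neg\phi\}$ is $\mathbf{K}$-consistent, so it is satisfied at some world of some model in $\mathcal{C}$, which witnesses $\Phi_{s}\not\models_{s}\phi$.

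The only genuinely delicate point is the soundness of the distribution axioms (K$_{\sigma}^{i}$) in tandem with (Dual$_{\sigma}$): since Definition~\ref{satisfiction} supplies a clause only for $\sigma$, one must be careful that the induced truth condition for $\sigma^{\square}$ — namely that $\sigma^{\square}(\phi_{1},\ldots,\phi_{n})$ holds at $w$ iff for every $(w_{1},\ldots,w_{n})$ with $(w,w_{1},\ldots,w_{n})\in R_{\sigma}$ there is some $i$ with $\mathfrak{M},w_{i}\models_{s_{i}}\phi_{i}$ — is exactly what validates the $n$ separate schemas (K$_{\sigma}^{i}$). Everything else is bookkeeping: Lindenbaum's lemma, the Truth Lemma (Lemma~\ref{truthlemma}), and the existence lemmas for $R_{\sigma}^{\mathbf{K}\Lambda}$ have all been cited in the general $\mathbf{K}\Lambda$ setting and apply verbatim at $\Lambda=\emptyset$.
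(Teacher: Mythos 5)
Your proposal is correct and matches the paper's route: the paper derives this theorem directly from the cited canonical-model results (Proposition~\ref{suffconstrongcomp}, Definition~\ref{canonicalmodel}, Lemma~\ref{truthlemma}, Proposition~\ref{compcanonic}) with $\Lambda=\emptyset$, exactly as you do for completeness, and leaves the soundness induction implicit as standard. Your extra care about the derived truth condition for $\sigma^{\square}$ and about (UG$^{i}_{\sigma}$) preserving validity rather than pointwise truth is a sound filling-in of details the paper omits, not a departure from its argument.
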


\section{Two-sorted modal logic and concept lattices}
In this section, we present the logics \textbf{KB} and \textbf{KF} and discuss their relationship with rough and formal concepts.

\subsection{Two-sorted modal logic and concept lattices in rough set theory}\label{KB}
Let us consider a special kind of two-sorted signature  $(\{s_{1}, s_{2}\}, \Sigma)$ where $\Sigma=\Sigma_{1}\uplus\Sigma_{2}$ is the direct sum of two sets of unary modalities such that $\Sigma_{1}=\Sigma_{s_{1}s_{2}}$ and $\Sigma_2=\Sigma_{s_{2}s_{1}}=\Sigma_{1}^{-1}:=\{\sigma^{-1}: \sigma\in \Sigma_{1}\}$. We say that the signature is bidirectional. Modal languages built over bidirectional signatures are interpreted in bidirectional frames.


\begin{definition}
 {\rm For the signature above, a two-sorted bidirectional frame is a quadruple :
 \[\mathfrak{F}_{2}:=(W_{1}, W_{2},  \{R_{\sigma}\}_{\sigma\in \Sigma_{1}}, \{R_{\sigma^{-1}}\}_{\sigma\in \Sigma_{1}})\] where $W_{1}, W_{2}$ are non-empty disjoint sets and $R_{\sigma}\subseteq W_2\times W_1$, $R_{\sigma^{-1}}$ is the converse of $R_{\sigma}$. The class of all two-sorted bidirectional frame is denoted as $\mathbb{BSFR}_{2}$.}
\end{definition}

The logic system $\mathbf{KB}$ for two-sorted bidirectional frames is define as $\mathbf{K}\Lambda$ where $\Lambda$ consists of the following axioms:
\[~\mbox{(B)}~ p\rightarrow (\sigma^{-1})^{\square}\sigma p~\mbox{and}~ q\rightarrow \sigma^{\square}\sigma^{-1} q~\mbox{where}~ p\in P_{s_{1}}~\mbox{and}~q\in P_{s_{2}}.\]
\begin{theorem}
\label{pmbdls}
{\rm $\mathbf{KB}$ is sound with respect to class $\mathbb{BSFR}_{2}$ of all two-sorted bidirectional frame.}
\end{theorem}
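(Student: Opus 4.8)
The plan is to reduce the statement to the soundness of the base system $\mathbf{K}$ together with a single semantic verification. Recall that $\mathbf{KB}=\mathbf{K}\Lambda$ where $\Lambda$ consists of the (B)-axioms, so any $\mathbf{KB}$-proof is built from propositional tautologies, the axioms $\mathrm{K}^{i}_{\sigma}$ and $\mathrm{Dual}_{\sigma}$, uniform substitution instances of the (B)-axioms, and applications of (MP) and (UG). Soundness with respect to $\mathbb{BSFR}_{2}$ is then proved by induction on the length of a $\mathbf{KB}$-proof, exactly as in the proof of Theorem~\ref{mainsoundcomplete}: the propositional, $\mathrm{K}^{i}_{\sigma}$ and $\mathrm{Dual}_{\sigma}$ cases, as well as the inductive steps for (MP) and (UG), hold in every $(S,\Sigma)$-model whatsoever, and in particular in every model based on a frame in $\mathbb{BSFR}_{2}$. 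Hence the only new obligation is to show that each (B)-axiom is valid on every two-sorted bidirectional frame; since frame validity is closed under uniform substitution, it suffices to check the two schemata $p\to(\sigma^{-1})^{\square}\sigma p$ and $q\to\sigma^{\square}\sigma^{-1}q$ themselves.

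The verification for the first schema goes as follows. Fix a frame $\mathfrak{F}_{2}=(W_{1},W_{2},\{R_{\sigma}\}_{\sigma\in\Sigma_{1}},\{R_{\sigma^{-1}}\}_{\sigma\in\Sigma_{1}})$, a valuation $v$, and a point $w\in W_{1}$ with $\mathfrak{M},w\models_{s_{1}}p$, where $\mathfrak{M}=(\mathfrak{F}_{2},v)$. Unfolding $(\sigma^{-1})^{\square}$ via $\mathrm{Dual}_{\sigma}$ and clause~(4) of Definition~\ref{satisfiction} (for unary $\sigma$ of arity $s_{1}\to s_{2}$ and $\sigma^{-1}$ of arity $s_{2}\to s_{1}$), I must show that for every $y\in W_{2}$ with $R_{\sigma^{-1}}wy$ there is some $x\in W_{1}$ with $R_{\sigma}yx$ and $\mathfrak{M},x\models_{s_{1}}p$. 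But $R_{\sigma^{-1}}$ is by definition the converse of $R_{\sigma}$, so $R_{\sigma^{-1}}wy$ yields $R_{\sigma}yw$; taking $x:=w$ does the job, since $\mathfrak{M},w\models_{s_{1}}p$ by hypothesis. The argument for $q\to\sigma^{\square}\sigma^{-1}q$, with $q\in P_{s_{2}}$ and the starting point in $W_{2}$, is entirely symmetric, again using that $R_{\sigma}$ is the converse of $R_{\sigma^{-1}}$.

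There is no serious obstacle here; the proof is essentially the two-sorted analogue of the classical soundness argument for the modal axiom B. The only point requiring care is the bookkeeping: one must track the sort of every subformula so that $\sigma$ and $\sigma^{-1}$ are each applied to an argument of the appropriate sort, and one must invoke the converse relation in the correct direction. The substantive observation is simply that bidirectionality --- the requirement, built into the definition of $\mathbb{BSFR}_{2}$, that $R_{\sigma^{-1}}$ be the converse of $R_{\sigma}$ --- is exactly what makes the witnessing point for the inner diamond be the very point one started from.
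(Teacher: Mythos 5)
Your proposal is correct and follows essentially the same route as the paper: reduce everything to the already-established soundness of the base system $\mathbf{K}$ and verify the (B)-axioms on bidirectional frames, where the witnessing point for the inner diamond is the starting point itself via the converse relation. The only addition is your explicit remark that frame validity is closed under uniform substitution, which the paper leaves implicit; otherwise the arguments coincide.
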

\begin{proof}
    The proof 
    is straightforward. Here we give the proof for the axiom $p\rightarrow (\sigma^{-1})^{\square}\sigma p$. Let $\mathfrak{M}$ be a model based on the frame $\mathfrak{F}_{2}$ defined above and $\mathfrak{M}, w_{1}\models_{s_{1}} p$ for some $w_{1}\in W_{1}$.  Now, for any $w_2\in W_{2}$ such that $R_{\sigma^{-1}}w_{1}w_2$, we have  $\mathfrak{M},w_2\models_{s_2}\sigma p$  because $R_{\sigma}w_2w_{1}$ follows from the converse of relation. This leads to $\mathfrak{M}, w_{1}\models_{s_{1}}(\sigma^{-1})^{\square}\sigma p$ immediately.
\end{proof}

The completeness theorem is proved using the canonical model of \textbf{KB}, which is an instance of that constructed in Definition \ref{canonicalmodel}. Hence,
\[\mathfrak{M}^{\textbf{KB}}:=(\{W_{s_{1}}^{\textbf{KB}}, W_{s_{2}}^{\textbf{KB}}\}, \{R_{\sigma}^{\textbf{KB}}, R_{\sigma^{-1}}^{\textbf{KB}}\\\}_{\sigma\in\Sigma}, V^{\textbf{KB}})\] It is easy to see that the model satisfies the following properties for $x\in W_{s_{1}}^{\textbf{KB}}$ and $y\in W_{s_{2}}^{\textbf{KB}}$:
\begin{itemize}
 \item[(a)] $R_{\sigma}^{\textbf{KB}}yx$ iff $\phi\in x$ implies that $\sigma\phi\in y$ for any $\phi\in Fm_{s_1}$.
\item[(b)] $R_{\sigma^{-1}}^{\textbf{KB}}xy$ iff $\phi\in y$ implies that $\sigma^{-1}\phi\in x$ for any $\phi\in Fm_{s_2}$.
\end{itemize}

\begin{theorem}\label{pmbdlc}
{\rm $\mathbf{KB}$ is strongly complete with respect to class of all two-sorted bidirectional models, that is for any $s\in \{s_{1}, s_{2}\}$, $\phi\in Fm_{s}$  and $\Phi_{s}\subseteq Fm_{s}$,  $\Phi_{s}\models^{\mathbb{BSFR}_{2}} _{s}\phi$ implies that $\Phi_{s}\vdash_{s}^{\mathbf{KB}}\phi$. }
\end{theorem}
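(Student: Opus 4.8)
The plan is to derive strong completeness of $\mathbf{KB}$ from the general completeness machinery recalled in Section~\ref{mspml}, specifically Propositions~\ref{suffconstrongcomp} and~\ref{compcanonic} together with the Truth Lemma~\ref{truthlemma}. By Proposition~\ref{suffconstrongcomp} it suffices to show that every $\mathbf{KB}$-consistent set $\Gamma$ of formulas is satisfiable in some model belonging to $\mathbb{BSFR}_2$. Proposition~\ref{compcanonic} already tells us that $\Gamma$ is satisfied in the canonical model $\mathfrak{M}^{\mathbf{KB}}$, so the real content is to verify that $\mathfrak{M}^{\mathbf{KB}}$ is (isomorphic to, or literally is) a model based on a two-sorted \emph{bidirectional} frame, i.e.\ that its accessibility relations $R_\sigma^{\mathbf{KB}}$ and $R_{\sigma^{-1}}^{\mathbf{KB}}$ are genuinely converses of one another.

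First I would record the simplified descriptions of the canonical relations in the two-sorted unary setting, namely the clauses (a) and (b) displayed just before the theorem: for maximal consistent sets $x\in W_{s_1}^{\mathbf{KB}}$, $y\in W_{s_2}^{\mathbf{KB}}$, $R_\sigma^{\mathbf{KB}}yx$ iff ($\phi\in x \Rightarrow \sigma\phi\in y$ for all $\phi\in Fm_{s_1}$), and dually for $R_{\sigma^{-1}}^{\mathbf{KB}}$. These follow directly from Definition~\ref{canonicalmodel}(b) specialized to unary modalities. The goal is then the equivalence $R_\sigma^{\mathbf{KB}}yx \iff R_{\sigma^{-1}}^{\mathbf{KB}}xy$.

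For the forward direction, assume $R_\sigma^{\mathbf{KB}}yx$ and let $\psi\in y$; I want $\sigma^{-1}\psi\in x$. Suppose not; then by maximality $\neg\sigma^{-1}\psi\in x$, and using the (B) axiom in the form $\psi\rightarrow\sigma^\square\sigma^{-1}\psi$ (an instance available in $x$'s sort via uniform substitution) together with the dual axiom (Dual$_\sigma$), one argues that $\neg\sigma^{-1}\psi\in x$ forces, via $R_\sigma^{\mathbf{KB}}yx$, some formula about $\sigma(\neg\sigma^{-1}\psi)$ to sit in $y$, contradicting $\psi\in y$ and the maximal consistency of $y$ once the (B) axiom for sort $s_2$, $\psi\rightarrow\sigma^\square\sigma^{-1}\psi$, is invoked. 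Concretely: from $\psi\in y$ and (B) we get $\sigma^\square\sigma^{-1}\psi\in y$; by (Dual$_\sigma$) and maximality this is $\neg\sigma\neg\sigma^{-1}\psi\in y$, so $\sigma\neg\sigma^{-1}\psi\notin y$; but $\neg\sigma^{-1}\psi\in x$ and $R_\sigma^{\mathbf{KB}}yx$ give $\sigma\neg\sigma^{-1}\psi\in y$ — contradiction. Hence $\sigma^{-1}\psi\in x$, establishing $R_{\sigma^{-1}}^{\mathbf{KB}}xy$. The reverse direction is symmetric, using the other (B) axiom $p\rightarrow(\sigma^{-1})^\square\sigma p$ in the same pattern. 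This shows $R_{\sigma^{-1}}^{\mathbf{KB}}$ is exactly the converse of $R_\sigma^{\mathbf{KB}}$, so $\mathfrak{M}^{\mathbf{KB}}$ is a model over a frame in $\mathbb{BSFR}_2$.

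Finally, I would close the argument: since the consistent set $\Phi_s$ (or more generally $\Gamma$) is satisfied at some world of $\mathfrak{M}^{\mathbf{KB}}$ by Proposition~\ref{compcanonic} and the Truth Lemma~\ref{truthlemma}(c), and $\mathfrak{M}^{\mathbf{KB}}\in\mathbb{BSFR}_2$, Proposition~\ref{suffconstrongcomp} yields strong completeness with respect to $\mathbb{BSFR}_2$; the stated implication $\Phi_s\models^{\mathbb{BSFR}_2}_s\phi \Rightarrow \Phi_s\vdash^{\mathbf{KB}}_s\phi$ is the usual contrapositive packaging (if $\Phi_s\not\vdash\phi$ then $\Phi_s\cup\{\neg\phi\}$ is consistent, hence satisfied in $\mathfrak{M}^{\mathbf{KB}}$, witnessing $\Phi_s\not\models^{\mathbb{BSFR}_2}_s\phi$). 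The main obstacle is the bookkeeping in the converse-relation verification — keeping straight which sort each (B)-instance lives in and chaining the dual-modality rewriting correctly — but it is a routine maximal-consistent-set manipulation once the canonical relation clauses (a) and (b) are in hand.
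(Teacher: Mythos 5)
Your proposal is correct and follows essentially the same route as the paper: reduce strong completeness via Propositions~\ref{suffconstrongcomp} and~\ref{compcanonic} to showing that the canonical model of $\mathbf{KB}$ is a two-sorted bidirectional frame, and verify that $R_{\sigma^{-1}}^{\mathbf{KB}}$ is the converse of $R_{\sigma}^{\mathbf{KB}}$ using the (B) axioms and the canonical-relation clauses. The only difference is cosmetic: where the paper passes from $\sigma^{\square}\sigma^{-1}\phi\in y$ to $\sigma^{-1}\phi\in x$ by citing Lemma~\ref{truthlemma}(a), you re-derive that step by a reductio through (Dual$_\sigma$) and the diamond clause of the canonical relation (and the stray remark that the (B)-instance lives in $x$'s sort should read $y$'s sort, as your concrete chain in fact uses).
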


\begin{proof}
    It is sufficient to show that the canonical model is a bidirectional frame. Then, the result follows from Propositions~\ref{suffconstrongcomp} and \ref{compcanonic}. Let $x\in W_{s_{1}}^{\mathbf{KB}}$  and $y\in W_{s_{2}}^{\mathbf{KB}}$ and assume $(y, x)\in R_{\sigma}^{\mathbf{KB}}$. Then, for any $\phi\in y$, we have $\sigma^{\square}\sigma^{-1}\phi\in y$ by axiom (B), which in turns implies $\sigma^{-1}\phi\in x$ by Lemma \ref{truthlemma}. Hence, $(x,y)\in R_{\sigma^{-1}}^{\mathbf{KB}}$ by property (b) of the canonical model. Analogously, we can show that $(x,y)\in R_{\sigma^{-1}}^{\mathbf{KB}}$ implies $(y, x)\in R_{\sigma}^{\mathbf{KB}}$. That is, $R_{\sigma^{-1}}^{\mathbf{KB}}$ is indeed the converse of $R_{\sigma}^{\mathbf{KB}}$.
\end{proof}

To represent rough concepts, we consider a particular bidirectional signature $(\{s_{1}, s_{2}\}, \{\lozenge, \lozenge^{-1}\})$ (i.e. the signature that $\Sigma_1$ is a singleton containing the modality $\lozenge$). As usual, we denote the dual modalities of $\lozenge$ and $\lozenge^{-1}$ by $\square$ and $\square^{-1}$ respectively. Let $\mathcal{SF}_{2}$ denote
the class of all bidirectional frames over the signature and let $\mathcal{K}$ be the set of all contexts. Then, there is a bijective correspondence between $\mathcal{K}$ and $\mathcal{SF}_{2}$ given by $(G, M, I)\mapsto (G, M, I^{-1}, I)$. Note that $I^{-1}$ and $I$ respectively correspond to modalities $\lozenge$ and $\lozenge^{-1}$ under the mapping.  We use $Fm(\textbf{RS}):=\{Fm(\textbf{RS})_{s_{1}}, Fm(\textbf{RS})_{s_{2}}\}$  and $\textbf{KB}_{2}$ to denote the indexed family of formulas and its logic system over the particular signature respectively. By Theorems \ref{pmbdls} and \ref{pmbdlc},  $\mathbf{KB}_{2}$ is sound and complete with respect to the class $\mathcal{SF}_{2}$ and hence $\mathcal{K}$.

Let us denote the truth set of a formula $\phi\in Fm(\textbf{RS})_{s_{i}} (i=1, 2)$ in a model $\mathfrak{M}$ by $[[\phi]]_\mathfrak{M}:=\{w\in W_{i}\mid \mathfrak{M}, w\models_{s_{i}} \phi\}$. We usually omit the subscript and simply write $[[\phi]]$.

\begin{proposition}
  {\rm  Let $\mathbb{K}:=(G,M,I)$ be a context and $\mathfrak{M}:=(G,M,I^{-1},I,v)$ be a model based on its corresponding frame. Then, the relationship between approximation operators and modal formulas is as follows:
  \begin{itemize}
 \item[(i)] $[[\phi]]^{\lozenge}=[[\lozenge\phi]]$ and $[[\phi]]^{\square}=[[\square\phi]]$ for $\phi\in Fm(\textbf{RS})_{s_1}$.
\item[(ii)] $[[\phi]]^{\lozenge^{-1}}=[[\lozenge^{-1}\phi]]$ and $[[\phi]]^{{\square}^{-1}}=[[\square^{-1}\phi]]$ for  $\phi\in Fm(\textbf{RS})_{s_2}$.
  \end{itemize}}
\end{proposition}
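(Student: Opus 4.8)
The plan is to prove all four identities by a single step of unfolding: on one side I expand the satisfaction clause for the relevant modal operator (Definition~\ref{satisfiction}), on the other side I expand the definition of the corresponding approximation operator, and then I observe that the two resulting set-membership conditions are literally the same. No induction on $\phi$ is needed, since each claimed equation only concerns the outermost modality applied to the (otherwise arbitrary) truth set $[[\phi]]$. The single fact that makes everything fit is that, under the correspondence $(G,M,I)\mapsto(G,M,I^{-1},I)$, the accessibility relation $R_{\lozenge}$ \emph{is} $I^{-1}$ and $R_{\lozenge^{-1}}$ \emph{is} $I$; these are the two properties recorded in the excerpt right after the definition of $\mathbf{KB}_2$.

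First I would handle the two diamond cases. For (i), fix $y\in M=W_{s_2}$. Since $\lozenge$ has arity $s_1\rightarrow s_2$, clause~4 of Definition~\ref{satisfiction} gives that $y\in[[\lozenge\phi]]$ iff there is $x\in G=W_{s_1}$ with $(y,x)\in R_{\lozenge}$ and $x\in[[\phi]]$; using $R_{\lozenge}=I^{-1}$, the condition $(y,x)\in R_{\lozenge}$ is exactly $x\in I^{-1}(y)$, so $y\in[[\lozenge\phi]]$ iff $I^{-1}(y)\cap[[\phi]]\neq\emptyset$, which is precisely the defining condition for $y\in[[\phi]]^{\lozenge}$. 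Case (ii) is the mirror image: for $x\in G$, $x\in[[\lozenge^{-1}\phi]]$ iff there is $y\in M$ with $(x,y)\in R_{\lozenge^{-1}}=I$ and $y\in[[\phi]]$, i.e.\ $I(x)\cap[[\phi]]\neq\emptyset$, i.e.\ $x\in[[\phi]]^{\lozenge^{-1}}$.

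Next I would obtain the two box cases from the diamond cases by duality. Since $\mathbf{KB}_2$ contains $\mathbf{K}$, the formulas $\square\phi\leftrightarrow\neg\lozenge\neg\phi$ and $\square^{-1}\phi\leftrightarrow\neg\lozenge^{-1}\neg\phi$ are instances of (Dual$_{\sigma}$) and hence valid on every frame (Theorem~\ref{mainsoundcomplete}), so $[[\square\phi]]=M\setminus[[\lozenge\neg\phi]]$ and $[[\square^{-1}\phi]]=G\setminus[[\lozenge^{-1}\neg\phi]]$. Combining this with the diamond identities just established and with $[[\neg\phi]]=G\setminus[[\phi]]$ (resp.\ $M\setminus[[\phi]]$) from clause~2, the condition $I^{-1}(y)\cap(G\setminus[[\phi]])\neq\emptyset$ negates to $I^{-1}(y)\subseteq[[\phi]]$, giving $[[\square\phi]]=\{y\in M\mid I^{-1}(y)\subseteq[[\phi]]\}=[[\phi]]^{\square}$, and symmetrically $[[\square^{-1}\phi]]=\{x\in G\mid I(x)\subseteq[[\phi]]\}=[[\phi]]^{\square^{-1}}$.

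The only place any care is required — the closest thing to an obstacle in an otherwise routine argument — is the bookkeeping of sorts and of the direction of the relations. One must keep in mind that $\lozenge$ is evaluated at property-points of $W_{s_2}=M$ and looks \emph{backwards} along $R_{\lozenge}\subseteq M\times G$, so it corresponds to the operator $(\cdot)^{\lozenge}$ subscripted by $I^{-1}$, whereas $\lozenge^{-1}$ is evaluated at object-points and looks along $R_{\lozenge^{-1}}=I$, corresponding to $(\cdot)^{\lozenge^{-1}}$ subscripted by $I$; confusing these would swap the roles of $I$ and $I^{-1}$. A second, minor subtlety is that Definition~\ref{satisfiction} only states the clause for the diamond-type operators, so the box identities are best routed through (Dual$_{\sigma}$) rather than through a direct semantic clause for $\sigma^{\square}$. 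Once these points are kept straight, the whole proof is just a rewriting of set-membership statements.
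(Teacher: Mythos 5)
Your proof is correct: the paper states this proposition without proof, treating it as an immediate consequence of the definitions, and your argument — unfolding the satisfaction clause for the diamond modalities under the identification $R_{\lozenge}=I^{-1}$, $R_{\lozenge^{-1}}=I$, then obtaining the box cases from (Dual$_{\sigma}$) together with soundness — is exactly the routine verification the authors intend, with the sort/direction bookkeeping handled correctly. Nothing is missing.
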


\begin{definition}
    {\rm Let $\mathfrak{C}:=\{(G,M,I^{-1},I)\}$ be a frame based on the context $\mathbb{K}=(G,M,I)$. Then, we define
\begin{itemize}
\item[(a)] $Fm_{PC_{ext}}:=\{\phi\in Fm(\textbf{RS})_{s_1}\mid~ \models^\mathfrak{C}_{s_{1} } \square^{-1}\lozenge\phi\leftrightarrow\phi\}$ and $Fm_{PC_{int}}:=\{\phi\in  Fm(\textbf{RS})_{s_2}\mid~ \models^\mathfrak{C}_{s_{2} } \lozenge\square^{-1}\phi\leftrightarrow\phi\}$
\item[(b)] $Fm_{OC_{ext}}:=\{\phi\in  Fm(\textbf{RS})_{s_{1}}\mid ~ \models^\mathfrak{C}_{s_{1} } \lozenge^{-1}\square\phi\leftrightarrow\phi\}$ and $Fm_{OC_{int}}:=\{\phi\in  Fm(\textbf{RS})_{s_{2}}\mid ~ \models^\mathfrak{C}_{s_{2} }\square \lozenge^{-1}\phi\leftrightarrow\phi\}$
\item[(c)] $Fm_{PC}:=\{(\phi, \psi) \mid \phi\in Fm_{PC_{ext}}, \psi\in Fm_{PC_{int}}, \models^\mathfrak{C}_{s_{1}} \phi\leftrightarrow \square^{-1}\psi, \models^\mathfrak{C}_{s_{2}}  \lozenge\phi\leftrightarrow\psi\}$
 \item[(d)] $Fm_{OC}:=\{(\phi, \psi) \mid \phi\in Fm_{OC_{ext}}, \psi\in Fm_{OC_{int}},   \models^\mathfrak{C}_{s_{1}} \phi\leftrightarrow \lozenge^{-1}\psi,  \models^\mathfrak{C}_{s_{2}} \square\phi\leftrightarrow \psi\}$
    \end{itemize}}
\end{definition}

Obviously, when $(\phi, \psi)\in Fm_{PC}$, $([[\phi]], [[\psi]])\in{\mathcal PC}$ for any models based on ${\mathfrak C}$. Hence, $Fm_{PC}$ consists of pairs of formulas representing property oriented concepts. Analogously, $Fm_{OC}$ provides the representation of object oriented concepts. Note that these sets are implicitly parameterized by the underlying context and should be indexed with $\mathbb{K}$. However, for simplicity, we usually omit the index. 

\subsection{Two sorted modal logic and concept lattice in formal concept analysis}
\label{KF}
To represent formal concepts, we consider another two-sorted bidirectional signature $(\{s_{1}, s_{2}\}, \{\boxminus, \boxminus^{-1}\}\})$, where $\Sigma_{s_{1}s_{2}}=\{\boxminus\}$ and $\Sigma_{s_{2}s_{1}}=\{\boxminus^{-1}\}$, and the logic  \textbf{KF} based on it. Syntactically, the signature is the same as that for $\mathbf{KB}_{2}$ except we use different symbols to denote the modalities. Hence, formation rules of formulas remain unchanged and we denote the indexed family of formulas by $Fm(\textbf{KF})=\{Fm(\textbf{KF})_{s_{1}}, Fm(\textbf{KF})_{s_{2}}\}$.
In addition, while both \textbf{KF} and $\mathbf{KB}_{2}$ are interpreted in bidirectional models, the main difference between them is on the way of their modalities being interpreted.
\begin{definition}\label{windosatis}
{\rm Let $\mathfrak{M}:=(W_{1}, W_{2}, R, R^{-1}, v)$. Then, 
\begin{itemize}
     \item[(a)] For $\phi\in Fm(\textbf{KF})_{s_{1}}$ and $w\in W_2$, $\mathfrak{M},w \models_{s_{2}}\boxminus\phi$ iff for any $w'\in W_1$, $\mathfrak{M},w'\models_{s_{1}}\phi$ implies $R(w,w')$
    \item[(b)] For $\phi\in Fm(\textbf{KF})_{s_{2}}$ and $w\in W_1$, $\mathfrak{M},w \models_{s_{1}}\boxminus^{-1}\phi$ iff for any $w'\in W_2$, $\mathfrak{M},w'\models_{s_{2}}\phi$ implies $R^{-1}(w,w')$
\end{itemize}}
\end{definition}

The logic system $\mathbf{KF}:=\{\mathbf{KF}_{s_{1}}, \mathbf{KF}_{s_{2}} \}$ is shown in Figure~\ref{fig2}.
\begin{figure}[htp]\centering
	\framebox[110mm] {\parbox{100mm}{\begin{enumerate}
    \item Axioms:
\begin{itemize}
	\item[(PL)] Propositional tautologies of sort $s_i$ for $i=1,2$.
	\item [$(K_{\boxminus}^{1})$] $\boxminus(\phi_{1}\wedge\neg \phi_{2})\rightarrow (\boxminus\neg\phi_{1}\rightarrow \boxminus\neg\phi_{2})$ for $\phi_{1},\phi_{2}\in Fm(\mathbf{KF})_{s_1}$
    \item [$(B^{1})$]$ \phi\rightarrow \boxminus^{-1}\boxminus \phi$ for $\phi\in Fm(\mathbf{KF})_{s_1}$
     \item [$(K_{\boxminus^{-1}}^{2})$] $\boxminus^{-1}(\psi_{1}\wedge \neg\psi_{2})\rightarrow (\boxminus^{-1}\neg\psi_{1} \rightarrow \boxminus^{-1}\neg\psi_{2})$ for $\psi_{1},\psi_{2}\in Fm(\mathbf{KF})_{s_2}$
  \item [$(B^{2})$]  $\psi\rightarrow \boxminus\boxminus^{-1}\psi$ for $\psi\in Fm(\mathbf{KF})_{s_2}$
  \end{itemize}
\item Inference rules:
\begin{itemize}
    \item $(MP)_{s}$: for $s\in \{s_{1}, s_{2}\}$ and $\phi,\psi\in Fm_{s}$ 
    \[\infer{\psi}{\phi, \phi\rightarrow \psi}\]
    \item $(UG^{1}_{\boxminus})$: for $\phi\in Fm(\textbf{KF})_{s_{1}}$,
	 \[\infer{{\boxminus}\phi}{\neg\phi}\] 
  \item $(UG^{2}_{\boxminus^{-1}})$: for $\psi\in Fm(\textbf{KF})_{s_{2}}$, 
  \[\infer{{\boxminus}^{-1}\psi}{\neg\psi}\]
\end{itemize}
\end{enumerate}
}} \caption{The axiomatic system \textbf{KF}}\label{fig2}
\end{figure}

We define a translation $\rho:Fm(\textbf{KF})\rightarrow Fm(\mathbf{RS})$ where $\rho=\{\rho_{1}, \rho_{2}\}$ is defined  as follows:
\begin{enumerate}
\item $\rho_{i}(p):=p$ for all $p\in P_{s_{i}}$ for $i=1,2$.
\item $\rho_{i}(\phi\wedge\psi):=\rho_{i}(\phi)\wedge\rho_{i}(\psi)$ for $\phi, \psi\in Fm(\textbf{KF})_{s_i}$, $i=1,2$.
\item $\rho_{i}(\neg\phi):=\neg\rho_{i}(\phi)$ for $\phi\in Fm(\textbf{KF})_{s_i}$, $i=1,2$.
\item $\rho_{1}(\boxminus\phi):=\square\neg\rho_{1}(\phi)$ for $\phi\in Fm_{\textbf{KF}_{s_{1}}} $.
\item $\rho_{2}(\boxminus^{-1}\phi):=\square^{-1}\neg\rho_{2}(\phi)$ for $\phi\in Fm_{\textbf{KF}_{s_{2}}}$.
\end{enumerate}

\begin{theorem}
          {\rm
          \label{translation} For any formula $\phi\in Fm_{\textbf{KF}_{s{i}}}(i=1,2)$  the following hold.
          \begin{itemize}
              \item[(a)] $\Phi\vdash^{\textbf{KF}}\phi$ if and only if $\rho(\Phi)\vdash^{\mathbf{KB}_{2}}\rho(\phi)$ for any $\Phi\subseteq Fm_{\textbf{KF}_{s{i}}}$.
              \item[(b)]  Let $\mathfrak{M}:=(W_{1}, W_{2}, I, I^{-1}, v)$ be a model and $\mathfrak{M}^{c}:=(W_{1}, W_{2}, I^{c}, (I^{-1})^{c}, v)$ be the corresponding complemented model,  $w\in W_{i}, \mathfrak{M}, w\models_{s_{i}} \phi$ if and only if $\mathfrak{M}^{c}, w\models_{s_{i}} \rho(\phi)$ for all $i=1,2$.
              \item[(c)] $\phi$ is valid in the class $\mathcal{SF}_{2}$ if and only if $\rho(\phi)$ is valid in $\mathcal{SF}_{2}$.
          \end{itemize}
           }
\end{theorem}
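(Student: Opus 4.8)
The plan is to prove the three parts in the order (b), (c), (a): parts (b) and (c) are semantic facts relating the translation $\rho$ to relational complementation, while (a) is a purely proof-theoretic transfer whose backward half is where the work lies. For (b) I would argue by simultaneous structural induction on $\phi$ over the two sorts. The atomic case is immediate since $\rho_{i}(p)=p$ and $\mathfrak{M},\mathfrak{M}^{c}$ share the valuation $v$, and the Boolean cases follow from the induction hypothesis because $\rho$ commutes with $\neg$ and $\wedge$. The only genuine case is the modal one: for $\boxminus\phi$ with $\phi\in Fm(\mathbf{KF})_{s_{1}}$ and $w\in W_{2}$, unfolding Definition~\ref{windosatis} gives that $\mathfrak{M},w\models_{s_{2}}\boxminus\phi$ iff every $w'\in W_{1}$ with $\mathfrak{M},w'\models_{s_{1}}\phi$ satisfies $I(w,w')$; contraposing the inner implication and invoking the induction hypothesis, this is equivalent to: every $w'$ with $(w,w')\in I^{c}$ satisfies $\mathfrak{M}^{c},w'\models_{s_{1}}\neg\rho(\phi)$; and since $I^{c}$ is precisely the relation of $\mathfrak{M}^{c}$, this is the box-clause reading of $\mathfrak{M}^{c},w\models_{s_{2}}\square\neg\rho(\phi)=\rho(\boxminus\phi)$. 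The $\boxminus^{-1}$ case is symmetric, using $(I^{-1})^{c}=(I^{c})^{-1}$.

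Part (c) is then a corollary of (b): the map $\mathfrak{F}\mapsto\mathfrak{F}^{c}$ that complements both relations is an involution, hence a bijection, of $\mathcal{SF}_{2}$ onto itself, so $\rho(\phi)$ is valid in $\mathcal{SF}_{2}$ iff it is true in every complemented model, iff by (b) $\phi$ is true in every model, iff $\phi$ is valid in $\mathcal{SF}_{2}$. For the forward direction of (a) I would induct on the length of a $\mathbf{KF}$-derivation, having first reduced $\Phi\vdash^{\mathbf{KF}}\phi$ to $\vdash^{\mathbf{KF}}(\bigwedge\Phi_{0})\to\phi$ for a finite $\Phi_{0}\subseteq\Phi$ and noting that $\rho$ sends this to $(\bigwedge\rho(\Phi_{0}))\to\rho(\phi)$. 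It then suffices to see that $\rho$ maps every $\mathbf{KF}$-axiom to a $\mathbf{KB}_{2}$-theorem and commutes with the rules: for (PL), $\rho$ fixes atoms and commutes with connectives, so it sends a tautology instance to a tautology instance; for $(K^{1}_{\boxminus})$ and $(K^{2}_{\boxminus^{-1}})$, a short computation together with the normality of $\square,\square^{-1}$ and some double-negation bookkeeping reduces $\rho$ of the axiom to the $\mathbf{K}$-axiom for $\square$, resp. $\square^{-1}$; for $(B^{1})$ and $(B^{2})$, $\rho$ of the axiom unwinds via the translation clauses for $\boxminus,\boxminus^{-1}$ and the dual axiom $\lozenge\leftrightarrow\neg\square\neg$ to a substitution instance of axiom (B) of $\mathbf{KB}$. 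Modus ponens is preserved trivially, and $(UG^{1}_{\boxminus}),(UG^{2}_{\boxminus^{-1}})$ are preserved because $\rho(\boxminus\phi)=\square\neg\rho(\phi)=\square\,\rho(\neg\phi)$ and $\mathbf{KB}_{2}$ has necessitation for $\square,\square^{-1}$.

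The backward direction of (a) is the crux. I would introduce an inverse translation $\bar\rho:Fm(\mathbf{RS})\to Fm(\mathbf{KF})$ that fixes atoms, commutes with $\neg$ and $\wedge$, and sets $\bar\rho(\square\psi):=\boxminus\neg\bar\rho(\psi)$ and $\bar\rho(\square^{-1}\psi):=\boxminus^{-1}\neg\bar\rho(\psi)$, reading $\lozenge,\lozenge^{-1}$ through their duals $\neg\square\neg,\neg\square^{-1}\neg$. Three lemmas would then be needed: (i) replacement of equivalents for $\mathbf{KF}$ --- from $\vdash^{\mathbf{KF}}\alpha\leftrightarrow\beta$ infer $\vdash^{\mathbf{KF}}\boxminus\alpha\leftrightarrow\boxminus\beta$, and dually --- which is routine from $(K^{1}_{\boxminus}),(UG^{1}_{\boxminus})$ and their $\boxminus^{-1}$ counterparts, exactly as in normal modal logic; (ii) $\vdash^{\mathbf{KF}}\bar\rho(\rho(\phi))\leftrightarrow\phi$ for every $\phi$, proved by structural induction with (i) absorbing the double negations produced by composing the translations; and (iii) $\Psi\vdash^{\mathbf{KB}_{2}}\chi$ implies $\bar\rho(\Psi)\vdash^{\mathbf{KF}}\bar\rho(\chi)$, proved by induction on $\mathbf{KB}_{2}$-derivations, where $\bar\rho$ of the $\mathbf{K}$-axiom for $\square$ reduces via (i) to an instance of $(K^{1}_{\boxminus})$, $\bar\rho$ of $(\mathrm{Dual}_{\sigma})$ collapses to a propositional tautology, $\bar\rho$ of axiom (B) is a $\mathbf{KF}$-theorem by two applications of (i) on top of $(B^{1}),(B^{2})$, and $\bar\rho$ commutes with modus ponens and with the necessitation rules (sending them to $(UG^{1}_{\boxminus}),(UG^{2}_{\boxminus^{-1}})$). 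Chaining (iii) and then (ii): $\rho(\Phi)\vdash^{\mathbf{KB}_{2}}\rho(\phi)$ gives $\bar\rho(\rho(\Phi))\vdash^{\mathbf{KF}}\bar\rho(\rho(\phi))$, hence $\Phi\vdash^{\mathbf{KF}}\phi$.

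The main obstacle is exactly this last direction: one must set up $\bar\rho$ and the replacement lemma so that the double negations created by $\rho\circ\bar\rho$ and $\bar\rho\circ\rho$ cancel uniformly across all connectives, and one must check case by case that $\bar\rho$ carries every $\mathbf{KB}_{2}$-axiom --- notably the bidirectionality axiom (B) and $(\mathrm{Dual}_{\sigma})$ --- to a $\mathbf{KF}$-theorem. If soundness and completeness of $\mathbf{KF}$ with respect to $\mathcal{SF}_{2}$ were already available, this direction would instead follow immediately from soundness of $\mathbf{KB}_{2}$, part (b), and the complementation bijection; since they are not, the syntactic route above is the one I would take --- and, conversely, this theorem together with the known completeness of $\mathbf{KB}_{2}$ is precisely what will later deliver the soundness and completeness of $\mathbf{KF}$.
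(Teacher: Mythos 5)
Your parts (b) and (c), and the left-to-right half of (a), are essentially the paper's own argument: induction on formulas with the contraposed window clause in the modal case, (c) as an immediate corollary since complementation is a bijection of $\mathcal{SF}_{2}$ onto itself, and the forward half of (a) by checking that $\rho$ sends axioms to theorems and rules to derivable rules (your ``double-negation bookkeeping'' is in fact more careful than the paper's claim that axioms map to axioms).

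The genuine gap is in your right-to-left half of (a), precisely at lemma (i), and it propagates to (ii). From $(K^{1}_{\boxminus})$ and $(UG^{1}_{\boxminus})$ one obtains only the rule ``from $\vdash\gamma\rightarrow\delta$ infer $\vdash\boxminus\neg\gamma\rightarrow\boxminus\neg\delta$'', i.e.\ replacement of equivalents under $\boxminus$ for arguments that are \emph{literally negations}; nothing in \textbf{KF} lets you pass between $\boxminus\alpha$ and $\boxminus\neg\neg\alpha$ when $\alpha$ is, say, an atom. But lemma (ii) needs exactly this at the first modal case: $\bar\rho(\rho(\boxminus p))=\boxminus\neg\neg p$, so you must prove $\vdash^{\textbf{KF}}\boxminus\neg\neg p\leftrightarrow\boxminus p$. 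This is not provable: interpret $\boxminus^{-1}$ standardly and $\boxminus$ standardly except that $\boxminus q$ is evaluated as false everywhere for atomic $q$. Every axiom of Figure~\ref{fig2} remains valid (the displayed boxed subformulas in $(K^{1}_{\boxminus})$, $(K^{2}_{\boxminus^{-1}})$, $(B^{2})$ all have non-atomic arguments and their validity depends only on the truth sets of those arguments, while $(B^{1})$ for atomic $p$ only requires $v(p)\subseteq\emptyset^{-}=W_{1}$), and MP, $(UG^{1}_{\boxminus})$, $(UG^{2}_{\boxminus^{-1}})$ preserve validity over this class; yet on the frame $W_{1}=\{1\}$, $W_{2}=\{a\}$, $R=\{(a,1)\}$ with $v(p)=\{1\}$ the formula $\boxminus\neg\neg p\rightarrow\boxminus p$ fails at $a$. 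Hence the congruence rule you call ``routine'' is not even admissible in \textbf{KF} as axiomatized, lemma (ii) is false, and the chain (iii)$+$(ii) cannot be completed; the same difficulty blocks your treatment of the $\mathbf{K}$-axiom case in (iii), where $\boxminus\neg(\alpha\rightarrow\beta)$ must be interchanged with $\boxminus(\alpha\wedge\neg\beta)$.

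You should know that the paper itself does not do better here: its proof of (a) is the two-line assertion that axioms and rules correspond under $\rho$ in both directions, which silently passes over exactly this point, and the antitonicity rule that would repair it (Proposition~\ref{needproflattic}) is derived \emph{from} Theorem~\ref{translation}, the circularity you implicitly noticed. So your diagnosis that the backward direction is the crux is correct, but closing it requires strengthening \textbf{KF} (e.g.\ adding a primitive congruence or antitonicity rule for $\boxminus,\boxminus^{-1}$, or an axiom such as $\boxminus\phi\leftrightarrow\boxminus\neg\neg\phi$), or else weakening the statement of (a); it cannot be obtained by the $\bar\rho$-bookkeeping alone.
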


\begin{proof}
\begin{itemize}
\item [(a).] We can prove it by showing that $\phi$ is an axiom  in \textbf{KF} if and only if $\rho(\phi)$ is an axiom in $\mathbf{KB}_{2}$, and for each rule in \textbf{KF}, there is a translation of it in $\mathbf{KB}_{2}$ and vice verse. 
\item [(b).] By induction on the complexity of formulas, as usual, the proof of basis and Boolean cases are straightforward.
For $\phi=\boxminus\psi$, let us assume any $w\in W_{2}$.  Then, by Definition \ref{windosatis}, $\mathfrak{M}, w\models_{s_{2}} \boxminus\phi$ iff for all $w'\in W_{1}$, $I^{c}ww'$ implies that $\mathfrak{M}, w'\models_{s_{1}}\neg\psi$. By induction hypothesis, this means that for all $w'\in W_{1}$, $I^{c}ww'$ implies that $\mathfrak{M}^c, w'\models_{s_{1}}\neg\rho(\psi)$. That is, $\mathfrak{M}^c, w\models_{s_{2}}\square\neg\rho(\phi)$. By definition of $\rho$, this is exactly  $\mathfrak{M^c}, w\models_{s_{2}} \rho(\phi)$. The case of $\phi=\boxminus^{-1}\psi$ is proved analogously. 
\item  [(c).] This follows immediately from (b).
\end{itemize}
\end{proof}

 \begin{proposition}\label{needproflattic}
{\rm \begin{itemize}
         \item[(a).] For $\phi_{1}, \phi_{2}\in Fm(\textbf{KF})_{s_{1}}$, $\infer{\boxminus\phi_{2}\rightarrow\boxminus\phi_{1}}{\phi_{1}\rightarrow\phi_{2}}$
        \item[(b).] For $\phi_{1}, \phi_{2}\in Fm(\textbf{KF})_{s_2}$, $\infer{\boxminus^{-1}\phi_{2}\rightarrow\boxminus^{-1}\phi_{1}}{\phi_{1}\rightarrow\phi_{2}}$
     \end{itemize}}
 \end{proposition}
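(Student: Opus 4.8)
The plan is to reduce both derived rules to the antitonicity of the box modalities in the normal many-sorted logic $\mathbf{KB}_{2}$, transported back and forth along the translation $\rho$ of Theorem~\ref{translation}(a). Two ingredients are needed. First, since $\rightarrow$ is definable from $\neg$ and $\wedge$, the map $\rho$ is a homomorphism for all Boolean connectives, and by its defining clauses it sends $\boxminus\phi$ to $\square\neg\rho(\phi)$ and $\boxminus^{-1}\phi$ to $\square^{-1}\neg\rho(\phi)$. Second, $\mathbf{KB}_{2}$ extends $\mathbf{K}$, so for the box $\square=\lozenge^{\square}$ the usual monotonicity rule is derivable: from $\vdash^{\mathbf{KB}_{2}}\alpha\rightarrow\beta$ one gets $\vdash^{\mathbf{KB}_{2}}\square(\alpha\rightarrow\beta)$ by $(UG^{1}_{\lozenge})$, and then $\vdash^{\mathbf{KB}_{2}}\square\alpha\rightarrow\square\beta$ by axiom $(K^{1}_{\lozenge})$ together with $(MP)$; the same derivation with $\lozenge^{-1}$ in place of $\lozenge$ gives the monotonicity rule for $\square^{-1}$.

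For item (a), I would assume $\vdash^{\textbf{KF}}\phi_{1}\rightarrow\phi_{2}$ with $\phi_{1},\phi_{2}\in Fm(\textbf{KF})_{s_{1}}$. Applying Theorem~\ref{translation}(a) with empty premise set gives $\vdash^{\mathbf{KB}_{2}}\rho(\phi_{1})\rightarrow\rho(\phi_{2})$. Contraposition in propositional logic yields $\vdash^{\mathbf{KB}_{2}}\neg\rho(\phi_{2})\rightarrow\neg\rho(\phi_{1})$, and the monotonicity rule for $\square$ then gives $\vdash^{\mathbf{KB}_{2}}\square\neg\rho(\phi_{2})\rightarrow\square\neg\rho(\phi_{1})$. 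By the defining clauses of $\rho$, this last formula is literally $\rho(\boxminus\phi_{2}\rightarrow\boxminus\phi_{1})$, so a second application of Theorem~\ref{translation}(a), now in the converse direction, produces $\vdash^{\textbf{KF}}\boxminus\phi_{2}\rightarrow\boxminus\phi_{1}$, which is exactly the rule asserted in (a).

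Item (b) is obtained by the same argument with $s_{1}$ and $s_{2}$ interchanged, $\boxminus$ replaced by $\boxminus^{-1}$, and $\square$ replaced by $\square^{-1}$ (using the monotonicity rule for $\square^{-1}$ recalled above). I do not anticipate a genuine obstacle: the only non-mechanical point is the derivation of the box-monotonicity rule inside $\mathbf{KB}_{2}$, which is the standard normal-modal-logic argument, while the commutation of $\rho$ with the connectives and the propositional contraposition step are routine. One could instead try a direct derivation inside $\textbf{KF}$ from $(K^{1}_{\boxminus})$ and $(UG^{1}_{\boxminus})$, but the natural instantiation $\alpha\mapsto\neg\phi_{2}$, $\beta\mapsto\neg\phi_{1}$ in $(K^{1}_{\boxminus})$ forces one to eliminate double negations inside the scope of $\boxminus$, which is awkward without first establishing the congruence rule; so the translation route is the cleaner option.
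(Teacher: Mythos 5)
Your proposal is correct and follows essentially the same route as the paper: translate the premise into $\mathbf{KB}_{2}$ via Theorem~\ref{translation}(a), contrapose, apply the standard $\square$-monotonicity derivation (UG, axiom K, MP), recognize the result as $\rho(\boxminus\phi_{2}\rightarrow\boxminus\phi_{1})$, and translate back using the converse direction of Theorem~\ref{translation}(a). The paper's derivation is exactly this sequence written out step by step, so your packaging of the UG/K/MP steps as a derived monotonicity rule is only a cosmetic difference.
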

\begin{proof}
    We only prove (a) and the proof of (b) is similar.
     \begin{align*}
     \vdash^{\textbf{KF}}& \phi_{1}\rightarrow\phi_{2}\\
     \vdash^{\mathbf{KB}_2}& \rho(\phi_{1})\rightarrow\rho(\phi_{2})
     (\mbox{\rm Theorem~\ref{translation} (a)})\\ 
        \vdash^{\mathbf{KB}_2}& (\rho(\phi_{1})\rightarrow\rho(\phi_{2}))\rightarrow(\neg\rho(\phi_{2})\rightarrow \neg\rho(\phi_{1}))(\mbox{\rm PL})\\
         \vdash^{\mathbf{KB}_2}& \neg\rho(\phi_{2})\rightarrow \neg\rho(\phi_{1})(\mbox{\rm MP})\\
         \vdash^{\mathbf{KB}_2}& \square(\neg\rho(\phi_{2})\rightarrow \neg\rho(\phi_{1}))(\mbox{\rm UG})\\
         \vdash^{\mathbf{KB}_2}& \square(\neg\rho(\phi_{2})\rightarrow \neg\rho(\phi_{1}))\rightarrow (\square\neg\rho(\phi_{2})\rightarrow\square\neg\rho(\phi_{1}))(\mbox{\rm K})\\
         \vdash^{\mathbf{KB}_2}&  \square\neg\rho(\phi_{2})\rightarrow{\square}\neg\rho(\phi_{1})(\mbox{\rm MP})\\
         \vdash^{\textbf{KF}}& \boxminus\phi_{2}\rightarrow\boxminus\phi_{1} (\mbox{\rm Theorem \ref{translation}(a)})
    \end{align*}
\end{proof}
\begin{theorem}
    {\rm \textbf{KF} is sound and strongly complete with respect to the class  $\mathcal{SF}_{2}$.}
\end{theorem}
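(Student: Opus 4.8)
The plan is to transfer soundness and strong completeness from $\mathbf{KB}_{2}$ to $\textbf{KF}$ along the translation $\rho$, using Theorem~\ref{translation} as the bridge, rather than rebuilding a canonical model for $\textbf{KF}$ from scratch. The first step is a purely frame-theoretic observation: for every model $\mathfrak{N}=(W_{1},W_{2},R,R^{-1},v)$ over $\mathcal{SF}_{2}$, the complemented model $\mathfrak{N}^{c}=(W_{1},W_{2},R^{c},(R^{-1})^{c},v)$ is again a model over $\mathcal{SF}_{2}$, because $(R^{-1})^{c}=(R^{c})^{-1}$, so the underlying frame is still bidirectional with its second relation the converse of its first, while the domains and the valuation are untouched. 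Since $\mathfrak{N}\mapsto\mathfrak{N}^{c}$ is an involution, it is a bijection on the class of all models over $\mathcal{SF}_{2}$.

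Next I would establish the semantic counterpart of Theorem~\ref{translation}(a), a ``strong'' version of Theorem~\ref{translation}(c): for any $\Phi\cup\{\phi\}\subseteq Fm(\textbf{KF})_{s_{i}}$,
\[\Phi\models^{\mathcal{SF}_{2}}_{s_{i}}\phi \iff \rho(\Phi)\models^{\mathcal{SF}_{2}}_{s_{i}}\rho(\phi),\]
where the left-hand consequence uses the window semantics of Definition~\ref{windosatis} and the right-hand one the relational semantics of Definition~\ref{satisfiction}. This is immediate from Theorem~\ref{translation}(b): that result gives $\mathfrak{N},w\models_{s_{i}}\psi$ iff $\mathfrak{N}^{c},w\models_{s_{i}}\rho(\psi)$ for every $\psi\in Fm(\textbf{KF})_{s_{i}}$, and since $\mathfrak{N}\mapsto\mathfrak{N}^{c}$ ranges bijectively over all models of $\mathcal{SF}_{2}$, the universally quantified implication ``$\mathfrak{N},w\models\Phi$ entails $\mathfrak{N},w\models\phi$ for all $\mathfrak{N}$'' is equivalent to ``$\mathfrak{N}^{c},w\models\rho(\Phi)$ entails $\mathfrak{N}^{c},w\models\rho(\phi)$ for all $\mathfrak{N}^{c}$'', i.e.\ to $\rho(\Phi)\models^{\mathcal{SF}_{2}}\rho(\phi)$.

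The theorem then follows from a chain of equivalences: $\Phi\vdash^{\textbf{KF}}\phi$ iff $\rho(\Phi)\vdash^{\mathbf{KB}_{2}}\rho(\phi)$ (Theorem~\ref{translation}(a)); iff $\rho(\Phi)\models^{\mathcal{SF}_{2}}\rho(\phi)$ (soundness and strong completeness of $\mathbf{KB}_{2}$ with respect to $\mathcal{SF}_{2}$, specializing Theorems~\ref{pmbdls} and~\ref{pmbdlc}); iff $\Phi\models^{\mathcal{SF}_{2}}\phi$ (the transfer lemma above). Reading the chain left to right yields soundness, reading it right to left yields strong completeness. Alternatively, soundness alone could be obtained directly by validating each of the axioms $(K_{\boxminus}^{1})$, $(B^{1})$, $(K_{\boxminus^{-1}}^{2})$, $(B^{2})$ and each inference rule of Figure~\ref{fig2} in an arbitrary model over $\mathcal{SF}_{2}$, but the translation route delivers both halves uniformly.

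I expect the only delicate point to be the bookkeeping in the transfer lemma: keeping the two satisfaction relations strictly apart, applying complementation to the relations and not to the valuation, and genuinely invoking the involutivity of $\mathfrak{N}\mapsto\mathfrak{N}^{c}$ so that the quantifier ``for all models over $\mathcal{SF}_{2}$'' is matched on both sides. None of this is conceptually hard; every remaining ingredient is a direct appeal to a result already established.
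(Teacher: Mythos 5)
Your proposal is correct and follows essentially the same route as the paper, which likewise derives the result from Theorem~\ref{translation} together with the soundness and strong completeness of $\mathbf{KB}_{2}$ with respect to $\mathcal{SF}_{2}$. You merely spell out the details the paper leaves implicit, in particular the strengthening of Theorem~\ref{translation}(c) to semantic consequence via the involutive complementation of models, which is a sound and welcome piece of bookkeeping.
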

\begin{proof}
    This follows from Theorem \ref{translation} and the fact that $
    \textbf{KB}_{2}$ is sound and strongly complete with respect to $\mathcal{SF}_{2}$.
\end{proof}

\begin{proposition}
  {\rm  Recalling the definition of truth set, we have
  \begin{itemize}
      \item[(i)] $[[\boxminus\phi]]=[[\phi]]^{+}$ for $\phi\in Fm(\textbf{KF})_{s_{1}}$
      \item [(ii)] $[[\boxminus^{-1}\phi]]=[[\phi]]^{-}$ for $\phi\in Fm(\textbf{KF})_{s_{2}}$.
  \end{itemize}}
\end{proposition}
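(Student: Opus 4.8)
The proposition concerns a model $\mathfrak{M}=(W_1,W_2,R,R^{-1},v)$ arising from a context $\mathbb{K}=(G,M,I)$ via the same context-to-frame correspondence used for $\mathbf{KB}_{2}$, namely $(G,M,I)\mapsto(G,M,I^{-1},I)$. Thus $W_1=G$, $W_2=M$, the accessibility relation for $\boxminus$ is $R=I^{-1}\subseteq M\times G$ (so $R(m,g)$ iff $gIm$), and its converse $R^{-1}=I\subseteq G\times M$ satisfies $R^{-1}(g,m)$ iff $gIm$. The plan is to prove both identities by directly unwinding Definition~\ref{windosatis} against the definitions of the derived operators $^{+}$ and $^{-}$.

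For (i), fix $\phi\in Fm(\textbf{KF})_{s_{1}}$, so $[[\phi]]\subseteq G$ and $\boxminus\phi$ has sort $s_{2}$. By Definition~\ref{windosatis}(a), for $m\in M=W_{2}$ we have $m\in[[\boxminus\phi]]$ iff for every $g\in W_{1}=G$, $g\in[[\phi]]$ implies $R(m,g)$, i.e. $g\in[[\phi]]$ implies $gIm$. By the definition of $^{+}$ applied to $A=[[\phi]]$, this is exactly the condition $m\in[[\phi]]^{+}$, whence $[[\boxminus\phi]]=[[\phi]]^{+}$. Item (ii) is symmetric: for $\phi\in Fm(\textbf{KF})_{s_{2}}$ we have $[[\phi]]\subseteq M$, and Definition~\ref{windosatis}(b) gives, for $g\in G=W_{1}$, that $g\in[[\boxminus^{-1}\phi]]$ iff for every $m\in W_{2}=M$, $m\in[[\phi]]$ implies $R^{-1}(g,m)$, i.e. $gIm$; by the definition of $^{-}$ applied to $B=[[\phi]]$ this says precisely $g\in[[\phi]]^{-}$, so $[[\boxminus^{-1}\phi]]=[[\phi]]^{-}$.

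If one prefers to argue inside the logic rather than through the semantics of $\boxminus$, an alternative is to use the translation $\rho$: since $\rho(\boxminus\phi)=\square\neg\rho(\phi)$, Theorem~\ref{translation}(b) gives $[[\boxminus\phi]]_{\mathfrak{M}}=[[\square\neg\rho(\phi)]]_{\mathfrak{M}^{c}}$; applying the earlier proposition relating $[[\square(\cdot)]]$ to the operator $(\cdot)^{\square}$ in the model $\mathfrak{M}^{c}$, and Theorem~\ref{translation}(b) once more to replace $\neg\rho(\phi)$ by the complement of $[[\phi]]$, a short computation with the complemented relation $I^{c}$ collapses the right-hand side to $[[\phi]]^{+}$; and dually for $\boxminus^{-1}$.

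The argument is essentially a definition chase, so there is no real obstacle; the only point that needs care is the bookkeeping of sorts and of the direction of the accessibility relation versus its converse, i.e. making sure that the relation appearing in Definition~\ref{windosatis} is, under the context-to-frame correspondence, exactly $I$ read in the order that makes the universal condition ``$g\in[[\phi]]$ implies $gIm$'' coincide verbatim with the definition of $(\cdot)^{+}$ (and likewise for $(\cdot)^{-}$).
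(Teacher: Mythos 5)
Your proposal is correct: the paper states this proposition without proof, treating it as an immediate consequence of Definition~\ref{windosatis} and the definitions of $(\cdot)^{+}$ and $(\cdot)^{-}$, and your definition chase (with the sort and relation-direction bookkeeping under the correspondence $(G,M,I)\mapsto(G,M,I^{-1},I)$) is exactly the intended argument. The alternative route via the translation $\rho$ and the complemented model is also sound, but unnecessary here.
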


\begin{definition}
    {\rm Let $\mathfrak{C}:=\{(G, M,I^{-1},I)\}$ be a frame based on the context $(G,M,I)$. Then, we define
    \begin{itemize}
        \item[(a)] $Fm_{FC_{ext}}:=\{\phi\in Fm(\mathbf{KF})_{s_1}\mid ~\models^\mathfrak{C}_{s_1} \boxminus^{-1}\boxminus\phi\leftrightarrow\phi\}$ and $Fm_{FC_{int}}:=\{\phi\in Fm(\mathbf{KF})_{s_{2}}\mid ~ \models^\mathfrak{C}_{s_{2} } \boxminus\boxminus^{-1}\phi\leftrightarrow\phi\}$

         \item[(b)] $Fm_{FC}:=\{(\phi, \psi) \mid \phi\in Fm_{PC_{ext}}, \psi\in Fm_{PC_{int}},  \models^\mathfrak{C}_{s_1} \phi\leftrightarrow \boxminus^{-1}\psi,  \models^\mathfrak{C}_{s_2}  \boxminus\phi\leftrightarrow \psi\}$
    \end{itemize}}
\end{definition}

In other words, the set $Fm_{FC}$ represents formal concepts induced from the context $(G, M, I)$.

\section{Logical representation of three concept lattices}
\label{lattice}
We have seen that a certain pairs of formulas in the logic \textbf{KF} and \textbf{KB}$_2$ can represent concepts in FCA and RST respectively.  The observation suggests the definition below.
\begin{definition}
          {\rm Let $\phi\in Fm(\textbf{RS})_{s_1}$, $\psi\in  Fm(\textbf{RS})_{s_2}$,  $\eta\in Fm(\textbf{KF})_{s_1}$, and  $\gamma\in  Fm(\textbf{KF})_{s_2}$. Then, for a context  $(G, M, I)$, we say that
          \begin{itemize}
              \item[(a)] $(\phi, \psi)$ is a  {\it (logical) property oriented concept} of $\mathbb{K}$ if $(\phi, \psi)\in Fm_{PC}$.
               \item[(b)] $(\phi, \psi)$ is a {\it  (logical) object oriented concept} of $\mathbb{K}$ if $(\phi, \psi)\in Fm_{OC}$.
                \item[(c)] $(\eta, \gamma)$ is a {\it (logical) formal concept} of $\mathbb{K}$ if $(\eta, \gamma)\in Fm_{FC}$.
          \end{itemize}}
      \end{definition}
We now explore the relationships between the three notions and their properties. In what follows, for a context $\mathbb{K}=(G,M,I)$, we usually use $\mathfrak{C}_{0}:=\{(G,M,I^{-1},I)\}$ and $\mathcal{C}_{1}:=\{(G,M, (I^{c})^{-1}),I^{c}\}$ to denote frames corresponding to $\mathbb{K}$ and $\mathbb{K}^c$ respectively.
\begin{proposition}\label{mapconcept}
{\rm  Let $\mathbb{K}:=(G, M, I)$ be a context. Then, 
      \begin{itemize}
          \item[(a)]   $(\phi, \psi)$  is a property oriented concept of $\mathbb{K}$ iff $(\neg\phi, \neg\psi)$ is an object oriented concept of $\mathbb{K}^{c}$ for $\phi\in Fm(\textbf{RS})_{s_1}$ and $\psi\in  Fm(\textbf{RS})_{s_2}$.
          \item[(b)] $(\phi, \psi)$  is a formal concept of $\mathbb{K}$  iff $(\rho(\phi), \neg \rho(\psi))$  is a property oriented concept of $\mathbb{K}^{c}$ for $\phi\in Fm(\textbf{KF})_{s_1}$ and $\psi\in  Fm(\textbf{KF})_{s_2}$.
          \item[(c)]   $(\phi,\psi)$  is a formal concept of $\mathbb{K}$  iff $(\neg\rho(\phi), \rho(\psi))$  is an object oriented concept of  $\mathbb{K}^{c}$ for $\phi\in Fm(\textbf{KF})_{s_1}$ and $\psi\in  Fm(\textbf{KF})_{s_2}$..
      \end{itemize}}
      \end{proposition}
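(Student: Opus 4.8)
The plan is to prove all three equivalences by unfolding the defining clauses of $Fm_{PC}$, $Fm_{OC}$ and $Fm_{FC}$ and matching them up clause by clause. Two tools suffice: (i) the Boolean fact that $\models\alpha\leftrightarrow\beta$ iff $\models\neg\alpha\leftrightarrow\neg\beta$, together with the $\mathbf{KB}_2$ duality identities $\neg\square\chi\equiv\lozenge\neg\chi$, $\neg\square^{-1}\chi\equiv\lozenge^{-1}\neg\chi$ and their converses, which let one push negations through the modalities; and (ii) for parts (b) and (c), the translation $\rho$ and Theorem~\ref{translation}, which transport a validity from the bidirectional frame of $\mathbb{K}$ to that of $\mathbb{K}^c$.

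I would treat (a) first. Over a fixed bidirectional frame, $(\phi,\psi)\in Fm_{PC}$ is the conjunction of $\models\square^{-1}\lozenge\phi\leftrightarrow\phi$, $\models\lozenge\square^{-1}\psi\leftrightarrow\psi$, $\models\phi\leftrightarrow\square^{-1}\psi$ and $\models\lozenge\phi\leftrightarrow\psi$, whereas $(\neg\phi,\neg\psi)\in Fm_{OC}$ is the conjunction of $\models\lozenge^{-1}\square\neg\phi\leftrightarrow\neg\phi$, $\models\square\lozenge^{-1}\neg\psi\leftrightarrow\neg\psi$, $\models\neg\phi\leftrightarrow\lozenge^{-1}\neg\psi$ and $\models\square\neg\phi\leftrightarrow\neg\psi$. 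Negating both sides of each clause of the second list and driving the negation inward gives $\neg\lozenge^{-1}\square\neg\phi\equiv\square^{-1}\lozenge\phi$, $\neg\square\lozenge^{-1}\neg\psi\equiv\lozenge\square^{-1}\psi$, $\neg\lozenge^{-1}\neg\psi\equiv\square^{-1}\psi$ and $\neg\square\neg\phi\equiv\lozenge\phi$, so the four clauses of $(\neg\phi,\neg\psi)\in Fm_{OC}$ collapse to the four clauses of $(\phi,\psi)\in Fm_{PC}$; hence the two memberships are equivalent over any single bidirectional frame. This is the logical counterpart of the dual isomorphism between the property- and object-oriented concept lattices, and Theorem~\ref{translation} supplies the transit to $\mathbb{K}^c$ that appears in the statement.

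For (b) I would expand $(\phi,\psi)\in Fm_{FC}(\mathbb{K})$ into the four validities over $\mathfrak{C}_0$ given by $\boxminus^{-1}\boxminus\phi\leftrightarrow\phi$, $\boxminus\boxminus^{-1}\psi\leftrightarrow\psi$, $\phi\leftrightarrow\boxminus^{-1}\psi$ and $\boxminus\phi\leftrightarrow\psi$. By Theorem~\ref{translation}(b) a $\mathbf{KF}$-formula is valid on the frame of $\mathbb{K}$ iff its $\rho$-image is valid on the frame of $\mathbb{K}^c$, and $\rho$ commutes with the Boolean connectives, so each of the four becomes a validity over $\mathcal{C}_1$ once $\rho$ is computed on the compound subformulas: clauses~4--5 of the definition of $\rho$ yield $\rho(\boxminus^{-1}\boxminus\phi)=\square^{-1}\neg\square\neg\rho(\phi)\equiv\square^{-1}\lozenge\rho(\phi)$, $\rho(\boxminus\boxminus^{-1}\psi)\equiv\square\lozenge^{-1}\rho(\psi)$, $\rho(\boxminus^{-1}\psi)=\square^{-1}\neg\rho(\psi)$ and $\rho(\boxminus\phi)=\square\neg\rho(\phi)$, the $\neg$ that $\rho$ inserts after each $\square$ or $\square^{-1}$ cancelling the one contributed by the next modality. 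Reading off the four resulting validities, and in the second and fourth negating both sides and pushing negations in as in (a), one gets exactly the four clauses defining $(\rho(\phi),\neg\rho(\psi))\in Fm_{PC}(\mathbb{K}^c)$: the first two are $\rho(\phi)\in Fm_{PC_{ext}}(\mathbb{K}^c)$ and $\neg\rho(\psi)\in Fm_{PC_{int}}(\mathbb{K}^c)$, the last two the two biconditional conditions. Part (c) is proved in the same fashion, matching the four translated validities against the clauses of $Fm_{OC}(\mathbb{K}^c)$ for the pair $(\neg\rho(\phi),\rho(\psi))$; alternatively it is (b) followed by the property/object duality of (a) applied on $\mathbb{K}^c$, using that each $Fm_\cdot$-set is closed under semantic equivalence so that $\neg\neg\rho(\psi)$ may be rewritten as $\rho(\psi)$.

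The obstacle I expect is clerical rather than conceptual: one must keep the sort of each subformula straight, remember that $\lozenge$ is interpreted by the relation $I^{-1}$ and $\lozenge^{-1}$ by $I$ (hence by $(I^c)^{-1}$ and $I^c$ over $\mathbb{K}^c$), and carry out the $\rho$-computations on nested modalities so that the inserted negations cancel in exactly the right places. A minor wrinkle to absorb is that Theorem~\ref{translation} writes the model of $\mathbb{K}$ with its two relations in the order opposite to the frame $\mathfrak{C}_0$ used here; since $(I^{-1})^c=(I^c)^{-1}$, this is immaterial. No idea beyond Theorem~\ref{translation} and the self-duality of the $\mathbf{KB}_2$ box/diamond pairs is needed.
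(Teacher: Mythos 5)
Your overall route is the paper's: for (a) you unfold the four defining validities and contrapose, pushing negations through the $\square/\lozenge$ dualities; for (b) you unfold the $Fm_{FC}$ clauses over $\mathfrak{C}_{0}$, transfer them via $\rho$ and Theorem~\ref{translation} to validities over $\mathcal{C}_{1}$, cancel the inserted negations (e.g.\ $\rho(\boxminus^{-1}\boxminus\phi)\equiv\square^{-1}\lozenge\rho(\phi)$) and read off the $Fm_{PC}$ clauses for $(\rho(\phi),\neg\rho(\psi))$; and you obtain (c) by composing (a) and (b), using closure of the $Fm$-sets under semantic equivalence to absorb the double negation. All of this matches the paper's proof step for step.

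The one genuine problem is the sentence in your treatment of (a) claiming that Theorem~\ref{translation} ``supplies the transit to $\mathbb{K}^{c}$''. It cannot: $\rho$ is defined only on $Fm(\textbf{KF})$, and even on its domain the theorem relates a formula in a model to its \emph{translation} in the complemented model; it never transports an untranslated $\mathbf{KB}_{2}$-validity about $\phi,\psi,\neg\phi,\neg\psi$ from the frame of $\mathbb{K}$ to the frame of $\mathbb{K}^{c}$. Moreover no argument could, because the literal statement of (a) with $\mathbb{K}^{c}$ fails: for $G=\{g\}$, $M=\{m\}$, $I=\emptyset$, the pair $(\top,\bot)$ satisfies all four $Fm_{PC}$ clauses over $\mathfrak{C}_{0}$, but over $\mathcal{C}_{1}$ (where $I^{c}$ is total) one has $[[\square\neg\top]]=\emptyset\neq M=[[\neg\bot]]$, so $(\neg\top,\neg\bot)\notin Fm_{OC}(\mathbb{K}^{c})$. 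What your same-frame computation actually proves --- and what the paper's own derivation proves, since every validity in it is over $\mathfrak{C}_{0}$ --- is the same-context duality: $(\phi,\psi)\in Fm_{PC}(\mathbb{K})$ iff $(\neg\phi,\neg\psi)\in Fm_{OC}(\mathbb{K})$; the ``$\mathbb{K}^{c}$'' in the statement of (a) is evidently a slip, as is confirmed by the later use of (a): the dual isomorphism $f([(\phi,\psi)])=[(\neg\phi,\neg\psi)]$ is between the property- and object-oriented lattices of the \emph{same} context $\mathbb{K}^{c}$, and (c) follows from (b) only if (a) is applied at $\mathbb{K}^{c}$ itself. So keep your clause-by-clause argument, delete the ``transit'' sentence, and state (a) over $\mathbb{K}$; with that reading your proofs of (b) and (c) go through exactly as in the paper.
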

      \begin{proof}
        \begin{itemize}
            \item [(a)] Suppose that $(\phi, \psi)$ is a property oriented concept of $\mathbb{K}$,  then by definition, $\models^{\mathfrak{C}_{0}}_{s_1}\square^{-1}\lozenge\phi\leftrightarrow\phi$, $ \models^{\mathfrak{C}_{0}}_{s_{2}}\lozenge\square^{-1}\psi\leftrightarrow\psi$, $ \models^{\mathfrak{C}_{0}}_{s_{1}} \phi\leftrightarrow \square^{-1}\psi$, and $ \models^{\mathfrak{C}_{0}}_{s_{2}} \lozenge \phi\leftrightarrow \psi$. Hence, we have the following derivation,
\begin{align*}
 &\models^{\mathfrak{C}_{0}}_{s_{1} } \square^{-1}\lozenge\phi\leftrightarrow\phi \\
 &\models^{\mathfrak{C}_{0}}_{s_{1} } (\square^{-1}\lozenge\phi\leftrightarrow\phi) \leftrightarrow (\neg\phi\leftrightarrow\neg\square^{-1}\lozenge\phi) \\
 &\models^{\mathfrak{C}_{0}}_{s_{1} }  \neg\phi\leftrightarrow\neg\square^{-1}\lozenge\phi \\
 &\models^{\mathfrak{C}_{0}}_{s_{1} }  \neg\phi\leftrightarrow\lozenge^{-1}\square\neg\phi \\
\end{align*}
Therefore, $\neg \phi\in Fm_{PC_{ext}}$.  Similarly, by $ \models^{\mathfrak{C}_{0}}_{s_{2} } \lozenge\square^{-1}\psi\leftrightarrow\psi$, contraposition and modus ponens, we can show that $\neg\psi\in Fm_{PC_{int}}$.

Using $ \models^{\mathfrak{C}_{0}}_{s_{1}} \phi\leftrightarrow \square^{-1}\psi $ , $\models^{\mathfrak{C}_{0}}_{s_{2}}  \lozenge \phi\leftrightarrow \psi$, contraposition and modus ponens , we can show that $(\neg\phi, \neg\psi)\in Fm_{OC}$.

We can also prove the converse direction by replacing  $\phi$,
$\psi$, $\lozenge$, $\square$ with $\neg\phi$, $\neg\psi$, $\square$, $\lozenge$ respectively.

\item [(b)] Because $(\phi,\psi)$ is a formal concept, we have $  \models^{\mathfrak{C}_{0}}_{s_1} \boxminus^{-1}\boxminus\phi\leftrightarrow\phi$, $\models^{\mathfrak{C}_{0}}_{s_2} \boxminus\boxminus^{-1}\psi\leftrightarrow\psi$, $\models^{\mathfrak{C}_{0}}_{s_1} \phi\leftrightarrow\boxminus^{-1}\psi$, and  $\models^{\mathfrak{C}_{0}}_{s_2}\boxminus \phi\leftrightarrow \psi$. By $\models^{\mathfrak{C}_{0}}_{s_1} \boxminus^{-1}\boxminus\phi\leftrightarrow\phi$ and Theorem \ref{translation} (a), we have $ \models^{\mathcal{C}_{1}}_{s_{1} } \rho(\boxminus^{-1}\boxminus\phi)\leftrightarrow\rho(\phi)$ which implies that $ \models^{\mathcal{C}_{1}}_{s_{1} }  \square^{-1}\lozenge\rho(\phi)\leftrightarrow\rho(\phi)$.  By $ \models^{\mathfrak{C}_{0}}_{s_{2} } \boxminus\boxminus^{-1}\psi\leftrightarrow\psi$ and Theorem \ref{translation} (a), we have $ \models^{\mathfrak{C}_1}_{s_{2} } \square\neg\square^{-1}\neg\rho(\psi)\leftrightarrow\rho(\psi)$, which implies that $ \models^{\mathfrak{C}_1}_{s_2} \lozenge\square^{-1}\neg\rho(\psi)\leftrightarrow\neg\rho(\psi)$.

Similarly, we can show that   $ \models^{\mathcal{C}_{1}}_{s_{1}} \rho(\phi)\leftrightarrow \square^{-1}\neg\rho(\psi) $ , $ \models^{\mathcal{C}_{1}}_{s_{2}}  \lozenge \rho(\phi)\leftrightarrow \neg\rho(\psi)$. Therefore, $(\rho(\phi), \rho(\psi))$ is a property oriented concept for $(G, M, I^{c})$. The proof for the converse direction is similar.
\item [(c)] It  follows from (a) and (b) immediately.
\end{itemize}
\end{proof}

Now, we can  define a relation $\equiv_{1}$  on the set  $Fm_{PC}$ as follows:
For $(\phi, \psi), (\phi', \psi')\in Fm_{PC}$, $(\phi, \psi) \equiv_{1}(\phi', \psi')$ if and only if $\models^{\mathfrak{C}_{0}} \phi\leftrightarrow\phi'$.

Analogously, we can define $\equiv_{2}$ and $\equiv_{3}$ on the set $Fm_{OC}$ and $Fm_{FC}$, respectively. Obviously, $\equiv_{1}, \equiv_{2}$ and $\equiv_{3}$ are all  equivalence relations. Let $Fm_{PC}/\equiv_{1}, Fm_{OC}/\equiv_{2}$, and $ Fm_{FC}/\equiv_{3}$ be the sets  of equivalence classes.

\begin{proposition}
\label{equivorder}
    {\rm For $(\phi, \psi), (\phi', \psi')\in Fm_{X}$, $(\phi, \psi) \equiv_i (\phi', \psi')$ iff $\models^{\mathfrak{C}_{0}} \psi\leftrightarrow\psi'$, where $i\in\{1,2,3\}$ for $X\in \{PC, OC, FC\}$ respectively.}
\end{proposition}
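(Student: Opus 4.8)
The plan is to unfold the definition of each $\equiv_i$ and reduce the statement to a single uniform fact about how the two coordinates of a pair in $Fm_X$ determine one another. By definition, $(\phi,\psi)\in Fm_{PC}$ carries the two validities $\models^{\mathfrak{C}_{0}}_{s_2}\lozenge\phi\leftrightarrow\psi$ and $\models^{\mathfrak{C}_{0}}_{s_1}\phi\leftrightarrow\square^{-1}\psi$; $(\phi,\psi)\in Fm_{OC}$ carries $\models^{\mathfrak{C}_{0}}_{s_2}\square\phi\leftrightarrow\psi$ and $\models^{\mathfrak{C}_{0}}_{s_1}\phi\leftrightarrow\lozenge^{-1}\psi$; and $(\phi,\psi)\in Fm_{FC}$ carries $\models^{\mathfrak{C}_{0}}_{s_2}\boxminus\phi\leftrightarrow\psi$ and $\models^{\mathfrak{C}_{0}}_{s_1}\phi\leftrightarrow\boxminus^{-1}\psi$. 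In each case write $\heartsuit$ for the first connective ($\lozenge$, $\square$, $\boxminus$ respectively) and $\heartsuit'$ for the second ($\square^{-1}$, $\lozenge^{-1}$, $\boxminus^{-1}$), so that membership in $Fm_X$ gives $\models^{\mathfrak{C}_{0}}\heartsuit\phi\leftrightarrow\psi$ and $\models^{\mathfrak{C}_{0}}\phi\leftrightarrow\heartsuit'\psi$, and likewise for the primed pair.

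The one ingredient needed is a congruence property: if $\models^{\mathfrak{C}_{0}}\alpha\leftrightarrow\beta$ then $\models^{\mathfrak{C}_{0}}\chi\alpha\leftrightarrow\chi\beta$ for every $\chi\in\{\lozenge,\square,\lozenge^{-1},\square^{-1},\boxminus,\boxminus^{-1}\}$. This is immediate from the satisfaction clauses of Definitions~\ref{satisfiction} and~\ref{windosatis} together with the propositions identifying these modalities with the set operators $A\mapsto A^{\lozenge}$, $A\mapsto A^{\square}$, $A\mapsto A^{+}$, $B\mapsto B^{-}$, and their converses: in every model $\mathfrak{M}$ on the frame $\mathfrak{C}_{0}$ the truth set $[[\chi\alpha]]_{\mathfrak{M}}$ is obtained by applying a fixed set operator to $[[\alpha]]_{\mathfrak{M}}$, while $\models^{\mathfrak{C}_{0}}\alpha\leftrightarrow\beta$ says exactly that $[[\alpha]]_{\mathfrak{M}}=[[\beta]]_{\mathfrak{M}}$ for all such $\mathfrak{M}$; hence $[[\chi\alpha]]_{\mathfrak{M}}=[[\chi\beta]]_{\mathfrak{M}}$, i.e. $\models^{\mathfrak{C}_{0}}\chi\alpha\leftrightarrow\chi\beta$. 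I would stress that for the window modalities $\boxminus,\boxminus^{-1}$ it is this congruence form that is required: those operators are antitone, so invoking the monotonicity rule of Proposition~\ref{needproflattic} would not do, but preservation of equalities still holds.

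With this in hand both directions are symmetric and uniform in $X$. For the forward direction, assume $(\phi,\psi)\equiv_i(\phi',\psi')$, that is $\models^{\mathfrak{C}_{0}}\phi\leftrightarrow\phi'$; congruence gives $\models^{\mathfrak{C}_{0}}\heartsuit\phi\leftrightarrow\heartsuit\phi'$, and combining this with $\models^{\mathfrak{C}_{0}}\heartsuit\phi\leftrightarrow\psi$ and $\models^{\mathfrak{C}_{0}}\heartsuit\phi'\leftrightarrow\psi'$ (both from membership in $Fm_X$) and propositional reasoning yields $\models^{\mathfrak{C}_{0}}\psi\leftrightarrow\psi'$. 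For the converse, assume $\models^{\mathfrak{C}_{0}}\psi\leftrightarrow\psi'$; congruence gives $\models^{\mathfrak{C}_{0}}\heartsuit'\psi\leftrightarrow\heartsuit'\psi'$, and combining with $\models^{\mathfrak{C}_{0}}\phi\leftrightarrow\heartsuit'\psi$ and $\models^{\mathfrak{C}_{0}}\phi'\leftrightarrow\heartsuit'\psi'$ gives $\models^{\mathfrak{C}_{0}}\phi\leftrightarrow\phi'$, i.e. $(\phi,\psi)\equiv_i(\phi',\psi')$.

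There is no genuine obstacle here: each case is a short chain of biconditionals valid on $\mathfrak{C}_{0}$, and the proof amounts to observing that $\phi$ and $\psi$ are interdefinable modulo $\mathfrak{C}_{0}$-equivalence via the defining clauses of $Fm_X$. The only point deserving a word of care is the antitone behaviour of the window modalities, which is exactly why I isolate the congruence fact above rather than appealing to a monotonicity principle.
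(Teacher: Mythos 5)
Your proposal is correct and takes essentially the same route as the paper: membership in $Fm_{X}$ supplies the two defining biconditionals, and frame-level congruence of the modalities (what the paper justifies ``according to the semantics'') transfers $\models^{\mathfrak{C}_{0}}\phi\leftrightarrow\phi'$ into $\models^{\mathfrak{C}_{0}}\psi\leftrightarrow\psi'$ and conversely; you simply do uniformly, and for both directions of the ``iff'', what the paper spells out only for the forward direction of the $FC$ case before declaring the rest similar. One small aside: your caution about Proposition~\ref{needproflattic} is unnecessary, since applying that antitone rule to each implication of a biconditional separately already yields $\models^{\mathfrak{C}_{0}}\boxminus\phi\leftrightarrow\boxminus\phi'$, so it would have served just as well as your semantic congruence argument.
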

\begin{proof}
    Let us prove the case of $FC$ as an example. Suppose $(\phi, \psi), (\phi', \psi')\in Fm_{FC}$ and $(\phi, \psi)\equiv_{3}(\phi',\psi')$. Then, $\models^{\mathfrak{C}_0}_{s_1} \phi\leftrightarrow\phi'$, which implies $\models^{\mathfrak{C}_{0}}_{s_{2}}  \boxminus\phi\leftrightarrow\boxminus\phi'$ according to the semantics of \textbf{KF}. In addition, by definition of $Fm_{FC}$, $\models^{\mathfrak{C}_{0}}_{s_{2}}  \boxminus\phi\leftrightarrow\psi$, and $ \models^{\mathfrak{C}_{0}}_{s_{2}}  \boxminus\phi'\leftrightarrow\psi'$. Hence, $\models^{\mathfrak{C}_{0}}_{s_{2}}\psi\leftrightarrow\psi'$.

    Proofs for other two cases are similar.
\end{proof}

\begin{proposition}
\label{intentextfrom}
    {\rm Let $X\in \{PC_{ext}, OC_{ext}, FC_{ext}\}$  and $Y\in \{PC_{int}, OC_{int}, FC_{int}\}$. Then,
\begin{itemize}
        \item [(a)] $Fm_{X}$ and $Fm_{Y}$ are closed under conjunction.
        \item [(b)] If $\phi\in Fm_{X}$ and $\psi\in Fm_{Y}$, then $\circ\phi\in Fm_{Y}$ and $\circ^{-1}\psi\in Fm_{X}$, where $\circ\in\{\boxminus, \square, \lozenge\}$ depending on $X$ and $Y$ according to their respective definitions.
\end{itemize}.}
\end{proposition}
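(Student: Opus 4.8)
The plan is to push the statement down to the powerset lattices $\mathcal{P}(G)$ and $\mathcal{P}(M)$ through the truth-set map $\phi\mapsto[[\phi]]$ and then argue purely order-theoretically. Recall from the propositions above that $[[\phi_1\wedge\phi_2]]=[[\phi_1]]\cap[[\phi_2]]$; that $[[\lozenge\phi]]=[[\phi]]^{\lozenge}$, $[[\square\phi]]=[[\phi]]^{\square}$, $[[\boxminus\phi]]=[[\phi]]^{+}$, and analogously for $\lozenge^{-1},\square^{-1},\boxminus^{-1}$; and that $\mathcal{FC}_{ext},\mathcal{PC}_{ext},\mathcal{OC}_{ext}$ are exactly the fixed-point sets of the maps $A\mapsto A^{+-}$, $A\mapsto A^{\lozenge\square^{-1}}$, $A\mapsto A^{\square\lozenge^{-1}}$, while $\mathcal{FC}_{int},\mathcal{PC}_{int},\mathcal{OC}_{int}$ are the fixed-point sets of $B\mapsto B^{-+}$, $B\mapsto B^{\square^{-1}\lozenge}$, $B\mapsto B^{\lozenge^{-1}\square}$. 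Since $\models^{\mathfrak{C}}\alpha\leftrightarrow\beta$ just says $[[\alpha]]=[[\beta]]$ under every valuation on $\mathfrak{C}$, membership of $\phi$ in $Fm_{X}$ or $Fm_{Y}$ amounts to: under every valuation, $[[\phi]]$ is a fixed point of the map attached to $X$ or $Y$.

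The core step is to exhibit the Galois adjunctions underlying the three kinds of concepts. For FCA, $A\mapsto A^{+}$ and $B\mapsto B^{-}$ form an antitone Galois connection, since $A\subseteq B^{-}\iff B\subseteq A^{+}$, so both $A\mapsto A^{+-}$ and $B\mapsto B^{-+}$ are closure operators. For the rough-set operators one verifies, by unfolding the definitions, the covariant adjunctions $A^{\lozenge}\subseteq B\iff A\subseteq B^{\square^{-1}}$ and $B^{\lozenge^{-1}}\subseteq A\iff B\subseteq A^{\square}$; a covariant adjunction $f\dashv g$ makes $g\circ f$ a closure operator and $f\circ g$ a kernel operator. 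This already proves (a) for every family $Fm_{X},Fm_{Y}$ whose attached map is a \emph{closure} operator, because the fixed-point set of a closure operator is a closure system, hence closed under arbitrary---in particular binary---intersection, and $[[\phi_1\wedge\phi_2]]=[[\phi_1]]\cap[[\phi_2]]$. I expect the delicate point, and the main obstacle, to be the two remaining families $Fm_{OC_{ext}}$ and $Fm_{PC_{int}}$, whose attached map ($A\mapsto A^{\square\lozenge^{-1}}$ and $B\mapsto B^{\square^{-1}\lozenge}$) comes out as a \emph{kernel} operator rather than a closure operator; there the general closure-system principle does not apply, and closure under $\wedge$ for these two would have to be handled by a separate argument exploiting the concrete form of the operators.

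For (b) the abstract ingredient is that an adjoint pair interchanges the two fixed-point systems: if $X=(g\circ f)(X)$ then $(f\circ g)\big(f(X)\big)=f\big((g\circ f)(X)\big)=f(X)$, so $f(X)$ is a fixed point of $f\circ g$, and symmetrically $g$ carries fixed points of $f\circ g$ to fixed points of $g\circ f$; for the antitone FCA connection the corresponding statement holds with both composites being closure operators. Attaching to each concept kind the forward and backward maps occurring in the definitions of $Fm_{X}$ and of the pair sets $Fm_{FC},Fm_{PC},Fm_{OC}$---namely $A\mapsto A^{+}$ and $B\mapsto B^{-}$, realized by $\boxminus,\boxminus^{-1}$, for FCA; $A\mapsto A^{\lozenge}$ and $B\mapsto B^{\square^{-1}}$, realized by $\lozenge,\square^{-1}$, for property-oriented concepts; and $A\mapsto A^{\square}$ and $B\mapsto B^{\lozenge^{-1}}$, realized by $\square,\lozenge^{-1}$, for object-oriented concepts---and using $[[\boxminus\phi]]=[[\phi]]^{+}$, $[[\lozenge\phi]]=[[\phi]]^{\lozenge}$, $[[\square\phi]]=[[\phi]]^{\square}$, one reads off $\circ\phi\in Fm_{Y}$ whenever $\phi\in Fm_{X}$, and dually $\circ^{-1}\psi\in Fm_{X}$ whenever $\psi\in Fm_{Y}$, where $\circ^{-1}$ denotes the backward partner map of the definition, namely $\boxminus^{-1}$, $\square^{-1}$, and $\lozenge^{-1}$ in the three cases. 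It then remains only to rewrite these set identities in the ``$\models^{\mathfrak{C}}\cdots\leftrightarrow\cdots$'' form of the definitions, which is routine bookkeeping. The whole argument is uniform across the three concept kinds once the correct adjoint pair is matched to each; the only genuinely non-routine input is the verification of the three adjunctions together with the determination of which composite is a closure operator and which a kernel operator.
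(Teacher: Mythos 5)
Your part (b) and the closure-operator half of part (a) are correct, and your semantic route is essentially the order-theoretic shadow of the paper's syntactic one: the paper writes out only the $Fm_{FC_{ext}}$ case of (a), obtaining $\boxminus^{-1}\boxminus(\phi\wedge\phi')\rightarrow\phi\wedge\phi'$ from the antitone rule of Proposition \ref{needproflattic} and the converse implication as the translation of axiom (B) (your adjunction unit), and for (b) it just applies the outer modality to the defining equivalence, which is your identity $fgf=f$ read syntactically. So for $Fm_{FC_{ext}}, Fm_{FC_{int}}, Fm_{PC_{ext}}, Fm_{OC_{int}}$ and for all of (b) your argument is sound and, if anything, more uniform than the paper's.

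The point you flagged as needing ``a separate argument exploiting the concrete form of the operators'' is, however, not fillable: the two kernel-operator families $Fm_{PC_{int}}$ and $Fm_{OC_{ext}}$ are in general \emph{not} closed under conjunction, so part (a) as stated is false for them, and the paper's ``other cases can be proved in a similar way'' breaks exactly where you predicted, because the needed direction $\phi\rightarrow\lozenge\square^{-1}\phi$ (resp.\ $\phi\rightarrow\lozenge^{-1}\square\phi$) is not an instance of axiom (B) and is not valid on bidirectional frames; only the counit direction is. Concretely, take $G=\{x_{1},x_{2}\}$, $M=\{a,b,c\}$, $I(x_{1})=\{a,b\}$, $I(x_{2})=\{b,c\}$. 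Since $\lozenge\square^{-1}\lozenge\alpha\leftrightarrow\lozenge\alpha$ is valid on every frame in $\mathcal{SF}_{2}$ (this is $fgf=f$), both $\lozenge p_{1}$ and $\lozenge p_{2}$ lie in $Fm_{PC_{int}}$ for the frame of this context; but under the valuation $v(p_{1})=\{x_{1}\}$, $v(p_{2})=\{x_{2}\}$ one gets $[[\lozenge p_{1}\wedge\lozenge p_{2}]]=\{b\}$, while $[[\square^{-1}(\lozenge p_{1}\wedge\lozenge p_{2})]]=\emptyset$ and hence $[[\lozenge\square^{-1}(\lozenge p_{1}\wedge\lozenge p_{2})]]=\emptyset\neq\{b\}$, so $\lozenge p_{1}\wedge\lozenge p_{2}\notin Fm_{PC_{int}}$. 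The mirror-image context (swap the roles of $G$ and $M$) refutes closure of $Fm_{OC_{ext}}$ under conjunction in the same way. What is true, and what your kernel/closure analysis already tells you, is that these two families are closed under \emph{disjunction} (fixed points of a kernel operator form a kernel system, closed under unions); accordingly the conjunction of intents in the $PC$ clause and of extents in the $OC$ clause would have to be replaced by disjunctions here and in the downstream uses (Corollary \ref{concept} and the operations $\vee_{1}$ and $\wedge_{2}$). So keep your proof for the four closure cases and for (b), and record the counterexample for the other two: the gap lies in the proposition (and in the paper's appeal to similarity), not in your argument.
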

\vspace{-1cm}
\begin{proof}
\begin{itemize}
\item [(a).] We prove the case of $Fm_{FC_{ext}}$ as an example and other cases can be proved in a similar way. Let $\phi, \phi'\in Fm_{FC_{ext}}$. Then, $ \models^{\mathfrak{C}_{0}}_{s_{1}} \boxminus^{-1}\boxminus\phi\leftrightarrow\phi$ and $\models^{\mathfrak{C}_{0}}_{s_{1}} \boxminus^{-1}\boxminus\phi'\leftrightarrow\phi'$. By using the translation $\rho$ and Theorem~\ref{translation}, we have both 
$\models^{\mathfrak{C}_{0}}_{s_{1}}\boxminus^{-1}\boxminus(\phi\wedge\phi')\rightarrow\boxminus^{-1}\boxminus\phi$ and $\models^{\mathfrak{C}_{0}}_{s_{1}}\boxminus^{-1}\boxminus(\phi\wedge\phi')\rightarrow\boxminus^{-1}\boxminus\phi'$. Hence, we can derive $\models^{\mathfrak{C}_{0}}_{s_{1}}\boxminus^{-1}\boxminus(\phi\wedge\phi')\rightarrow(\phi\wedge\phi')$. Also, with the translation, we have     $\models^{\mathfrak{C}_{0}}_{s_{1}}(\phi\wedge\phi')\rightarrow  \boxminus^{-1}\boxminus(\phi\wedge\phi')$ because the formula is mapped to an instance of axiom (B).  Hence, $\phi\wedge\phi'\in Fm_{FC_{ext}}$.
\item [(b).] Let us prove the case of $\phi\in Fm_{FC_{ext}}$ as an example.  Assume that $\phi\in Fm_{FC_{ext}}$ and $\circ=\boxminus$. Then, according to the semantics of $\boxminus$, $\models^{\mathfrak{C}_{0}}_{s_{1}}\boxminus^{-1}\boxminus\phi\leftrightarrow\phi$ implies $\models^{\mathfrak{C}_{0}}_{s_{1}}\boxminus\boxminus^{-1}\boxminus\phi\leftrightarrow\boxminus\phi$ . Hence $\boxminus\phi\in Fm_{FC_{int}}$. Similarly, if  $\psi\in Fm_{FC_{int}}$, then $\boxminus^{-1}\psi\in Fm_{FC_{ext}}$.
\end{itemize}
\end{proof}

From the proposition, we can derive the following corollary immediately.
\begin{corollary}\label{concept}
    {\rm
    \begin{itemize}
        \item[(a)] $(\phi_{1}, \psi_{1}), (\phi_{2}, \psi_{2})\in Fm_{PC}$ implies that $(\phi_{1}\wedge\phi_{2}, \lozenge(\phi_{1}\wedge\phi_{2}))$  and $ (\square^{-1}(\psi_{1}\wedge\psi_{2}), \psi_{1}\wedge\psi_{2} )\in Fm_{PC}$.
        \item[(b)] $(\phi_{1}, \psi_{1}), (\phi_{2}, \psi_{2})\in Fm_{OC}$ implies that $(\phi_{1}\wedge\phi_{2}, \square(\phi_{1}\wedge\phi_{2}))$ and $ (\lozenge^{-1}(\psi_{1}\wedge\psi_{2}), \psi_{1}\wedge\psi_{2} )\in Fm_{OC}$.
        \item[(c)] $(\phi_{1}, \psi_{1}), (\phi_{2}, \psi_{2})\in Fm_{FC}$ implies that $(\phi_{1}\wedge\phi_{2}, \boxminus(\phi_{1}\wedge\phi_{2}))$ and $ (\boxminus^{-1}(\psi_{1}\wedge\psi_{2}), \psi_{1}\wedge\psi_{2} )\in Fm_{FC}$.
    \end{itemize}}
\end{corollary}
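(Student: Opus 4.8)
The plan is to deduce the corollary directly from Proposition~\ref{intentextfrom} by unfolding the definitions of $Fm_{PC}$, $Fm_{OC}$, and $Fm_{FC}$. Recall, for instance, that a pair $(\phi,\psi)$ lies in $Fm_{PC}$ precisely when four conditions hold: $\phi\in Fm_{PC_{ext}}$, $\psi\in Fm_{PC_{int}}$, $\models^{\mathfrak{C}_{0}}_{s_{1}}\phi\leftrightarrow\square^{-1}\psi$, and $\models^{\mathfrak{C}_{0}}_{s_{2}}\lozenge\phi\leftrightarrow\psi$. So, starting from $(\phi_{1},\psi_{1}),(\phi_{2},\psi_{2})\in Fm_{PC}$, I would check these four clauses in turn for the pair $(\phi_{1}\wedge\phi_{2},\lozenge(\phi_{1}\wedge\phi_{2}))$, and then symmetrically for $(\square^{-1}(\psi_{1}\wedge\psi_{2}),\psi_{1}\wedge\psi_{2})$.

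For the first pair: $\phi_{1}\wedge\phi_{2}\in Fm_{PC_{ext}}$ is immediate from Proposition~\ref{intentextfrom}(a) (closure of $Fm_{PC_{ext}}$ under conjunction); $\lozenge(\phi_{1}\wedge\phi_{2})\in Fm_{PC_{int}}$ is an instance of Proposition~\ref{intentextfrom}(b), with the operator being the extent-to-intent operator $\lozenge$ read off the definition of $Fm_{PC}$; the biconditional $\models^{\mathfrak{C}_{0}}_{s_{1}}(\phi_{1}\wedge\phi_{2})\leftrightarrow\square^{-1}\lozenge(\phi_{1}\wedge\phi_{2})$ is exactly the membership condition for $\phi_{1}\wedge\phi_{2}\in Fm_{PC_{ext}}$ already obtained in the first step; and the last biconditional $\models^{\mathfrak{C}_{0}}_{s_{2}}\lozenge(\phi_{1}\wedge\phi_{2})\leftrightarrow\lozenge(\phi_{1}\wedge\phi_{2})$ is trivial. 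The second pair is handled the same way with the roles of extent and intent exchanged: $\psi_{1}\wedge\psi_{2}\in Fm_{PC_{int}}$ by Proposition~\ref{intentextfrom}(a), $\square^{-1}(\psi_{1}\wedge\psi_{2})\in Fm_{PC_{ext}}$ by Proposition~\ref{intentextfrom}(b) via the intent-to-extent operator $\square^{-1}$, the $s_{2}$-biconditional is the witness of $\psi_{1}\wedge\psi_{2}\in Fm_{PC_{int}}$, and the $s_{1}$-biconditional is trivial.

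Cases (b) and (c) run along identical lines, replacing the pair of transfer operators $(\lozenge,\square^{-1})$ used in (a) by $(\square,\lozenge^{-1})$ for object oriented concepts and by $(\boxminus,\boxminus^{-1})$ for formal concepts, exactly as these operators occur in the definitions of $Fm_{OC}$ and $Fm_{FC}$ respectively, and invoking Proposition~\ref{intentextfrom} with the corresponding choice of $X$, $Y$, and $\circ$. There is no real obstacle here; the argument is pure bookkeeping, and the only point demanding attention is to keep the two transfer operators and the extent/intent roles matched with the concept type under consideration, so that the biconditionals one needs are precisely the ones supplied by membership in $Fm_{X_{ext}}$ and $Fm_{X_{int}}$. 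This is why the corollary is stated as following immediately from Proposition~\ref{intentextfrom}.
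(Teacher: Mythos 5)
Your proposal is correct and is essentially the paper's own argument: the paper derives the corollary "immediately" from Proposition~\ref{intentextfrom}, and your unfolding of the four membership clauses of $Fm_{PC}$, $Fm_{OC}$, $Fm_{FC}$ (closure under conjunction, transfer by the appropriate operator, and noting that the remaining biconditionals are either the extent/intent fixed-point condition or trivial) is exactly that immediate derivation spelled out.
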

Now we can define the following structures:

$(Fm_{PC}/\equiv_{1}, \vee_{1}, \wedge_{1})$, where $[(\phi, \psi)],  [(\phi', \psi')]\in Fm_{PC}/\equiv_{1}$,
\vspace{-.1cm}
 \begin{align*}
            [(\phi, \psi)]\wedge_{1}[(\phi', \psi')]&:= [(\phi\wedge\phi', \lozenge(\phi\wedge\phi') )]\\
            [(\phi, \psi)]\vee_{1}[(\phi', \psi')]&:= [(\square^{-1}(\psi\wedge\psi'), (\psi\wedge\psi') )]
        \end{align*}

 $(Fm_{OC}/\equiv_{2}, \vee_{2},  \wedge_{2})$, where
        $[(\phi, \psi)],  [(\phi', \psi')]\in Fm_{OC}/\equiv_{2}$,

        \vspace{-.4cm}
        \begin{align*}
            [(\phi, \psi)]\wedge_{2}[(\phi', \psi')]&:= [(\phi\wedge\phi', \square(\phi\wedge\phi') )]\\
            [(\phi, \psi)]\vee_{2}[(\phi', \psi')]&:= [(\lozenge^{-1}(\psi\wedge\psi'), (\psi\wedge\psi') )]
        \end{align*}

 $(Fm_{FC}/\equiv_{3}, \vee_{3}, \wedge_{3})$, where  $[(\phi, \psi)], [(\phi', \psi')]\in Fm_{FC}/\equiv_{3}$,
\vspace{-.1cm}
 \begin{align*}
     [(\phi, \psi)]\wedge_{3}[(\phi', \psi')]&:=[(\phi\wedge\phi', \boxminus(\phi\wedge\phi') )]\\
     [(\phi, \psi)]\vee_{3}[(\phi', \psi')]&:=[(\boxminus^{-1}(\psi\wedge\psi'), (\psi\wedge\psi') )]
 \end{align*}

       \begin{theorem}
          {\rm  For a context $\mathbb{K}$, $(Fm_{PC}/\equiv_{1}, \vee_{1}, \wedge_{1})$, $(Fm_{OC}/\equiv_{2}, \vee_{2}, \wedge_{2})$ and $(Fm_{FC}/\equiv_{3}, \vee_{3}, \wedge_{3})$,  are  lattices.
          }
       \end{theorem}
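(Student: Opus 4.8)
The plan is to verify the lattice axioms for each of the three structures in turn, though really the three cases are structurally identical so the argument should be presented once (say, for $Fm_{FC}/\equiv_3$) with the remark that $Fm_{PC}/\equiv_1$ and $Fm_{OC}/\equiv_2$ are handled verbatim after substituting the appropriate modalities. First I would check that the operations $\wedge_3$ and $\vee_3$ are well-defined on equivalence classes: using Corollary~\ref{concept}(c) the representatives chosen do land in $Fm_{FC}$, and using Proposition~\ref{equivorder} together with the semantic clauses for $\boxminus,\boxminus^{-1}$ (which are monotone/antitone in the expected way, cf.\ Proposition~\ref{needproflattic}) one sees that replacing $(\phi,\psi)$ by an $\equiv_3$-equivalent pair and $(\phi',\psi')$ likewise yields $\equiv_3$-equivalent outputs. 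Concretely, $(\phi,\psi)\equiv_3(\phi_0,\psi_0)$ gives $\models^{\mathfrak{C}_0}\phi\leftrightarrow\phi_0$, hence $\models^{\mathfrak{C}_0}(\phi\wedge\phi')\leftrightarrow(\phi_0\wedge\phi')$, which is exactly what is needed for the first-component comparison defining $\equiv_3$.

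Next I would establish commutativity, associativity and idempotency of each operation. Commutativity and idempotency are immediate from the corresponding properties of propositional conjunction modulo $\models^{\mathfrak{C}_0}$. For associativity of $\wedge_3$, I would compute $([(\phi,\psi)]\wedge_3[(\phi',\psi')])\wedge_3[(\phi'',\psi'')] = [(\phi\wedge\phi'\wedge\phi'', \boxminus(\phi\wedge\phi'\wedge\phi''))]$ from both bracketings, using that the second component of a class in $Fm_{FC}$ is determined (up to $\models^{\mathfrak{C}_0}$-equivalence) by the first via $\boxminus$, so the nested second components are irrelevant to the class. Associativity of $\vee_3$ is dual, using that the first component is determined by the second via $\boxminus^{-1}$.

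The remaining and slightly more delicate point is the two absorption laws, $[(\phi,\psi)]\vee_3([(\phi,\psi)]\wedge_3[(\phi',\psi')]) = [(\phi,\psi)]$ and its dual. Here I would use the defining conditions of $Fm_{FC}$ crucially: for $(\phi,\psi)\in Fm_{FC}$ we have $\models^{\mathfrak{C}_0}_{s_2}\boxminus\phi\leftrightarrow\psi$ and $\models^{\mathfrak{C}_0}_{s_1}\phi\leftrightarrow\boxminus^{-1}\psi$, i.e.\ $\models^{\mathfrak{C}_0}_{s_1}\boxminus^{-1}\boxminus\phi\leftrightarrow\phi$. Computing the left-hand side of the first absorption law one gets a class whose first component is $\boxminus^{-1}(\psi\wedge\boxminus(\phi\wedge\phi'))$; one then shows $\models^{\mathfrak{C}_0}_{s_1}\boxminus^{-1}(\psi\wedge\boxminus(\phi\wedge\phi'))\leftrightarrow\phi$ by sandwiching: $\phi\wedge\phi'\le\phi$ gives (via Proposition~\ref{needproflattic}) $\boxminus\phi\le\boxminus(\phi\wedge\phi')$, so $\psi$ (which is $\boxminus$-equivalent to $\boxminus\phi$) implies $\boxminus(\phi\wedge\phi')$, whence $\psi\wedge\boxminus(\phi\wedge\phi')$ is $\models^{\mathfrak{C}_0}$-equivalent to $\psi$, and then $\boxminus^{-1}\psi$ is $\models^{\mathfrak{C}_0}$-equivalent to $\phi$. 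The dual absorption law uses the analogous chain with $\boxminus^{-1}$ in place of $\boxminus$. I expect this absorption step — keeping straight which component is the "free" one and invoking the closure/monotonicity facts correctly — to be the main obstacle; once it is done, combined with the translation machinery of Theorem~\ref{translation} to transport everything to $\mathbf{KB}_2$ where the propositional reasoning is cleanest, the proof is complete, and the $PC$ and $OC$ cases follow by the same computation with $(\square^{-1},\lozenge)$ and $(\lozenge^{-1},\square)$ respectively in the roles of $(\boxminus^{-1},\boxminus)$.
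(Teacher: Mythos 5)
Your proposal is correct and follows essentially the same route as the paper: well-definedness via Corollary~\ref{concept} and Proposition~\ref{equivorder}, commutativity/associativity from propositional equivalences, and the absorption laws via the defining conditions of $Fm_{FC}$ together with the antitonicity rule of Proposition~\ref{needproflattic} (the paper verifies the absorption law $[(\phi_1,\psi_1)]\wedge_3([(\phi_1,\psi_1)]\vee_3[(\phi_2,\psi_2)])=[(\phi_1,\psi_1)]$ explicitly and treats the dual analogously, while you spell out the dual one — an inessential difference). The only cosmetic remark is that idempotency need not be checked separately, since it follows from the absorption laws.
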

       \begin{proof}
           We give proof for the structure $(Fm_{FC}/\equiv_{3}, \vee_{3}, \wedge_{3})$ and the proofs of other cases are similar. Let $(\phi, \psi), (\phi_{1}, \psi_{1}), (\phi^{\prime}, \psi^{\prime}), (\phi^{\prime}_{1}, \psi^{\prime}_{1})\in Fm_{FC}$ such that $(\phi, \psi)\equiv_{3}(\phi_{1}, \psi_{1})$ and $(\phi^{\prime}, \psi^{\prime})\equiv_{3}(\phi^{\prime}_{1},\psi^{\prime}_{1})$. By Corollary \ref{concept}, $(\phi\wedge\phi^{\prime}, \boxminus(\phi\wedge\phi^{\prime})), (\boxminus^{-1}(\psi\wedge\psi^{\prime}), \psi\wedge\psi^{\prime} ), (\phi_{1}\wedge\phi_{1}^{\prime}, \boxminus(\phi_{1}\wedge\phi_{1}^{\prime}))$ and $ (\boxminus^{-1}(\psi_{1}\wedge\psi_{1}^{\prime}), \psi_{1}\wedge\psi_{1}^{\prime} )\in Fm_{FC}$.  Now $(\phi, \psi)\equiv_{3}(\phi_{1}, \psi_{1})$ and $(\phi^{\prime}, \psi^{\prime})\equiv_{3}(\phi^{\prime}_{1},\psi^{\prime}_{1})$ implies that $\models^{\mathfrak{C}_{0}} \phi\leftrightarrow\phi_{1} $ and $\models^{\mathfrak{C}_{0}}\phi^{\prime}\leftrightarrow\phi_{1}^{\prime}$. By Proposition \ref{equivorder},  $\models^{\mathfrak{C}_{0}} \psi\leftrightarrow\psi_{1} $ and $\models^{\mathfrak{C}_{0}}\psi^{\prime}\leftrightarrow\psi_{1}^{\prime}$. $\models^{\mathfrak{C}_{0}} \phi\wedge\phi^{\prime}\leftrightarrow\phi_{1}\wedge\phi_{1}^{\prime}$ and  $\models^{\mathfrak{C}_{0}} \psi\wedge\psi^{\prime}\leftrightarrow\psi_{1}\wedge\psi_{1}^{\prime}$ which implies that  $(\phi\wedge\phi^{\prime}, \boxminus(\phi\wedge\phi^{\prime}))\equiv_{3} (\phi_{1}\wedge\phi_{1}^{\prime}, \boxminus(\phi_{1}\wedge\phi_{1}^{\prime}))$ and  $(\boxminus^{-1}(\psi\wedge\psi^{\prime}), \psi\wedge\psi^{\prime} )\equiv_{3} (\boxminus^{-1}(\psi_{1}\wedge\psi_{1}^{\prime}), \psi_{1}\wedge\psi_{1}^{\prime} )$. Hence,  $\wedge_{3}$ and $\vee_{3}$  are well-defined operations. Their commutativity and associativity follow from the fact that $\vdash^{\textbf{KF}}\phi\wedge\psi\leftrightarrow\psi\wedge\phi$ and  $\vdash^{\textbf{KF}} (\phi\wedge\psi)\wedge\gamma\leftrightarrow \phi\wedge(\psi\wedge\gamma)$. Now we will show that for all $[(\phi_{1}, \psi_{1})], [(\phi_{2}, \psi_{2})]\in Fm_{FC}/\equiv_{3}$, $[(\phi_{1}, \psi_{1})]\wedge ([\phi_{1}, \psi_{1}]\vee [(\phi_{2}, \psi_{2})])=[(\phi_{1}, \psi_{1})]$ which is equivalent to  $[(\phi_{1}\wedge\boxminus^{-1}(\psi_{1}\wedge\psi_{2}), \boxminus(\phi_{1}\wedge\boxminus^{-1}(\psi_{1}\wedge\psi_{1})))]=[(\phi_{1}, \psi_{1})]$. We know that $\models^{\mathfrak{C}_{0}} \phi_{1}\wedge \boxminus^{-1}(\psi_{1}\wedge\psi_{2})\rightarrow\phi_{1}$. In addition,
\begin{align*}
    & \models^{\mathfrak{C}_{0}}_{s_{1}} \phi_{1}\leftrightarrow \boxminus^{-1}\psi_{1} ~\mbox{as}~ (\phi_{1}, \psi_{1})\in Fm_{FC}\\
&\models^{\mathfrak{C}_{0}}_{s_{2}}\psi_{1}\wedge \psi_{2}\rightarrow \psi_{1}\\
&\models^{\mathfrak{C}_{0}}_{s_{1}}\boxminus^{-1}\psi_{1}\rightarrow \boxminus^{-1}(\psi_{1}\wedge \psi_{2})~\mbox{by Proposition \ref{needproflattic}}\\
     & \models^{\mathfrak{C}_{0}}_{s_{1}} \phi_{1}\rightarrow \boxminus^{-1}(\psi_{1}\wedge \psi_{2})\\
      & \models^{\mathfrak{C}_{0}}_{s_{1}} \phi_{1}\rightarrow\phi_{1}\wedge \boxminus^{-1}(\psi_{1}\wedge \psi_{2})
    \end{align*}
           So $ \models^{\mathfrak{C}_{0}}_{s_{1}} \phi_{1}\leftrightarrow \boxminus^{-1}(\psi_{1}\wedge \psi_{2})$ which implies that $[(\phi_{1}, \psi_{1})]\wedge ([\phi_{1}, \psi_{1}]\vee [(\phi_{2}, \psi_{2})])=[(\phi_{1}, \psi_{1})]$. Analogously, we can show that $[(\phi_{1}, \psi_{1})]\vee ([\phi_{1}, \psi_{1}]\wedge [(\phi_{2}, \psi_{2})])=[(\phi_{1}, \psi_{1})]$. Hence   $(Fm_{FC}/\equiv_{3}, \vee_{3}, \wedge_{3})$ is a lattice.
       \end{proof}

       \begin{theorem}
           {\rm Let $\mathbb{K}$ be a context and let $\mathbb{K}^{c}$ be its corresponding complemented context. Let $Fm_{FC}$ be the set of logical formal concepts of $\mathbb{K}$ and let $Fm_{PC}$ and $Fm_{OC}$ be the sets of logical property oriented concepts and logical object oriented concepts of $\mathbb{K}^{c}$ respectively. Then, \begin{itemize}
\item[(a)] $(Fm_{FC}/\equiv_{3}, \vee_{3}, \wedge_{3})$ and  $(Fm_{PC}/\equiv_{1}, \vee_{1}, \wedge_{1})$ are isomorphic.
\item[(b)] $(Fm_{PC}/\equiv_{1}, \vee_{1}, \wedge_{1})$ and $(Fm_{OC}/\equiv_{2}, \vee_{2}, \wedge_{2})$  are dually isomorphic.
\item[(c)] $(Fm_{FC}/\equiv_{3}, \vee_{3}, \wedge_{3})$ and $(Fm_{OC}/\equiv_{2}, \vee_{2}, \wedge_{2})$ are dually isomorphic.
\end{itemize}}
       \end{theorem}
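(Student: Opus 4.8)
The plan is to build the isomorphisms out of the translation $\rho$ together with the bijections on concepts already established in Proposition~\ref{mapconcept}, and then check that these maps are order-preserving (or order-reversing) on the quotient lattices. Since all three lattices were shown to be well-defined lattices in the previous theorem, and since a lattice isomorphism is exactly an order-isomorphism, it suffices to exhibit a bijection between equivalence classes and verify it respects $\wedge$ and $\vee$ (for (a)) or swaps them (for (b), (c)).

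For part (a), I would define $F:Fm_{FC}/\equiv_3\to Fm_{PC}/\equiv_1$ by $F([(\phi,\psi)]):=[(\rho(\phi),\neg\rho(\psi))]$. By Proposition~\ref{mapconcept}(b), $(\rho(\phi),\neg\rho(\psi))$ is indeed a property oriented concept of $\mathbb{K}^c$, so $F$ lands in the right set; well-definedness follows because $(\phi,\psi)\equiv_3(\phi',\psi')$ gives $\models^{\mathfrak{C}_0}\phi\leftrightarrow\phi'$, hence by Theorem~\ref{translation}(b) (passing between $\mathfrak{C}_0$ and $\mathcal{C}_1$) $\models^{\mathcal{C}_1}\rho(\phi)\leftrightarrow\rho(\phi')$, i.e.\ $F([(\phi,\psi)])\equiv_1 F([(\phi',\psi')])$. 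Injectivity is the same computation read backwards, and surjectivity follows from the ``converse direction'' of Proposition~\ref{mapconcept}(b). To see $F$ is a lattice homomorphism, I would compute $F([(\phi_1,\psi_1)]\wedge_3[(\phi_2,\psi_2)])=F([(\phi_1\wedge\phi_2,\boxminus(\phi_1\wedge\phi_2))])=[(\rho(\phi_1)\wedge\rho(\phi_2),\neg\rho(\boxminus(\phi_1\wedge\phi_2)))]$ and note $\rho(\boxminus(\phi_1\wedge\phi_2))=\square\neg(\rho(\phi_1)\wedge\rho(\phi_2))$, so $\neg\rho(\boxminus(\phi_1\wedge\phi_2))$ is equivalent over $\mathcal{C}_1$ to $\lozenge(\rho(\phi_1)\wedge\rho(\phi_2))$; this is exactly $F([(\phi_1,\psi_1)])\wedge_1 F([(\phi_2,\psi_2)])$ by the definition of $\wedge_1$. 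The $\vee_3\mapsto\vee_1$ case is analogous, using $\rho(\boxminus^{-1}(\psi_1\wedge\psi_2))=\square^{-1}\neg(\rho(\psi_1)\wedge\rho(\psi_2))$ and the fact that negation turns $\psi_1\wedge\psi_2$ into $\neg\rho(\psi_1)\vee\neg\rho(\psi_2)$—here one must be a little careful, since the $\vee_1$ operation is built from a conjunction $\psi\wedge\psi'$ of intents, so I would track the de Morgan step explicitly and invoke Proposition~\ref{equivorder} to pass freely between the extent and intent representatives of a class.

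For part (b), I would define $H:Fm_{PC}/\equiv_1\to Fm_{OC}/\equiv_2$ by $H([(\phi,\psi)]):=[(\neg\phi,\neg\psi)]$, which is legitimate by Proposition~\ref{mapconcept}(a) and well-defined/injective/surjective by contraposition exactly as in that proof. The point is that $H$ reverses order: from the definitions, $[(\phi_1,\psi_1)]\wedge_1[(\phi_2,\psi_2)]=[(\phi_1\wedge\phi_2,\lozenge(\phi_1\wedge\phi_2))]$, and applying $H$ gives $[(\neg(\phi_1\wedge\phi_2),\neg\lozenge(\phi_1\wedge\phi_2))]=[(\neg\phi_1\vee\neg\phi_2,\square\neg(\phi_1\wedge\phi_2))]$; using Proposition~\ref{equivorder} to replace this by its intent-side form and comparing with the definition of $\vee_2$ on the classes $H([(\phi_i,\psi_i)])=[(\neg\phi_i,\neg\psi_i)]$ (whose intents are $\neg\psi_i$, with $\neg\psi_1\wedge\neg\psi_2 = \neg(\psi_1\vee\psi_2)$), one gets $H([(\phi_1,\psi_1)]\wedge_1[(\phi_2,\psi_2)])=H([(\phi_1,\psi_1)])\vee_2 H([(\phi_2,\psi_2)])$, and dually for $\vee_1$. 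Part (c) then follows immediately by composing: $(Fm_{FC}/\equiv_3)\cong(Fm_{PC}/\equiv_1)$ (an isomorphism, from (a)) followed by $(Fm_{PC}/\equiv_1)\cong^{\partial}(Fm_{OC}/\equiv_2)$ (a dual isomorphism, from (b)) yields a dual isomorphism, so no separate argument is needed—although one should double-check that the context on which (a) and (b) are stated matches up, i.e.\ that both refer to $\mathbb{K}^c$, which they do by the theorem's hypotheses.

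The main obstacle I anticipate is purely bookkeeping rather than conceptual: keeping straight, in each of the $\wedge/\vee$ computations, \emph{which} of the two coordinates of a concept the lattice operation is ``indexed by'' (extent for $\wedge$, intent for $\vee$), and correctly pushing negations and the translation $\rho$ through the $\boxminus,\boxminus^{-1},\square,\square^{-1},\lozenge,\lozenge^{-1}$ operators while freely switching extent/intent representatives via Proposition~\ref{equivorder}. Once one fixes the convention that a class $[(\phi,\psi)]$ is determined by either coordinate (Proposition~\ref{equivorder}), each verification reduces to a short propositional-plus-$\rho$ computation of the kind already carried out in the proofs of Proposition~\ref{mapconcept} and the preceding theorem, so I would present (a) in full detail and note that (b) is the contraposition analogue and (c) is the composition.
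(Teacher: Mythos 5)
Your overall route coincides with the paper's: part (a) is proved there via the map $h([(\phi,\psi)]):=[(\rho(\phi),\rho(\neg\psi))]$ (the same formula as your $(\rho(\phi),\neg\rho(\psi))$, since $\rho$ commutes with negation), with well-definedness, injectivity and surjectivity obtained from Proposition~\ref{mapconcept} and Theorem~\ref{translation}; part (b) uses $[(\phi,\psi)]\mapsto[(\neg\phi,\neg\psi)]$; part (c) is obtained by composition. The difference is that the paper verifies only the meet computation you reproduce ($\rho(\neg\boxminus(\phi_1\wedge\phi_2))$ becomes $\lozenge\rho(\phi_1\wedge\phi_2)$) and then declares $h$ an isomorphism, while declaring (b) ``analogous''; you promise, in addition, a direct verification of the join case of (a) and of the meet-to-join swap in (b), and that is exactly where your sketch does not go through.

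Concretely, in (a) the image under $F$ of $[(\phi_1,\psi_1)]\vee_3[(\phi_2,\psi_2)]$ has intent $\neg\rho(\psi_1\wedge\psi_2)$, i.e.\ the \emph{disjunction} $\neg\rho(\psi_1)\vee\neg\rho(\psi_2)$, whereas $F([(\phi_1,\psi_1)])\vee_1 F([(\phi_2,\psi_2)])$ has, by the definition of $\vee_1$, intent $\neg\rho(\psi_1)\wedge\neg\rho(\psi_2)$; these are not equivalent over $\mathcal{C}_1$, and neither a de~Morgan step nor Proposition~\ref{equivorder} (which only lets you test equality of classes on either coordinate) closes the gap — comparing extents gives the same mismatch, $\square^{-1}$ of a disjunction versus of a conjunction. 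Likewise in (b): your claimed identity $H([(\phi_1,\psi_1)]\wedge_1[(\phi_2,\psi_2)])=H([(\phi_1,\psi_1)])\vee_2 H([(\phi_2,\psi_2)])$ needs $\square\neg(\phi_1\wedge\phi_2)\leftrightarrow\neg\psi_1\wedge\neg\psi_2$, but with $\psi_i\leftrightarrow\lozenge\phi_i$ the right-hand side reduces to $\square\neg(\phi_1\vee\phi_2)$, not to $\square(\neg\phi_1\vee\neg\phi_2)$. So the equalities you assert are not established by the manipulations you describe (they in fact clash with the literal definitions of $\vee_1,\vee_2$, which conjoin intents). The clean way to finish, in the spirit of the paper, is to check a single operation and invoke order-theoretic generalities: a bijection between lattices preserving binary meets (respectively, sending meets to joins) is automatically an order isomorphism (respectively, a dual order isomorphism), hence also preserves (respectively, swaps) joins. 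If instead you insist on verifying the join clauses directly, you must first reconcile the definition of $\vee_1,\vee_2$ with what complementation and $\rho$ actually produce; this is a substantive point to flag, not a bookkeeping ``de Morgan'' step.
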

       \begin{proof}
           (a)  By Proposition \ref{mapconcept}, the mapping  $h:Fm_{FC}/\equiv_{3}\rightarrow Fm_{PC}/\equiv_{1} $ defined by $h([(\phi, \psi)]):=[(\rho(\phi), \rho(\neg\psi))]$ is well-defined and surjective. Now $h([(\phi_{1}, \psi_{1})])=h([(\phi_{2}, \psi_{2})])$ implies  $[\rho((\phi_{1}), \rho(\neg\psi_{1}))]=[(\rho(\phi_{2}), \rho(\neg\psi_{2}))]$, which in turn implies $\models^{\mathfrak{C}_1} \rho(\phi_{1})\leftrightarrow\rho(\phi_{2})$, and by Theorem \ref{translation}, $\models^{\mathfrak{C}_0} \phi_{1}\leftrightarrow\phi_{2}$. This means that 
         $[(\phi_{1}, \psi_{1})]=[(\phi_{2}, \psi_{2})]$. Thus, $h$ is injcetive, and as a result, $h$ is a bijection. In addition,
           \begin{align*}
               h([(\phi_{1}, \psi_{1})]\wedge_3 [(\phi_{2}, \psi_{2})])&= h([(\phi_{1}\wedge \phi_{2}, \boxminus(\phi_{1}\wedge\phi_{2}))])\\
               &=([\rho(\phi_{1}\wedge\phi_{2}),\rho(\neg\boxminus(\phi_{1}\wedge\phi_{2}))])\\
                &=([\rho(\phi_{1}\wedge\phi_{2}),\lozenge\rho(\phi_{1}\wedge\phi_{2})])\\
                &= h([(\phi_{1}, \psi_{1})])\wedge_1 h([(\phi_{2}, \psi_{2})])
           \end{align*}
           Therefore, $h$ is an isomorphism.

           \noindent (b) Analogously, we can show that $f: Fm_{PC}/\equiv_{1}\rightarrow Fm_{OC}/\equiv_{1} $ such that $f([(\phi, \psi)]):=[(\neg\phi, \neg\psi)]$ is a dual isomorphism.

           \noindent (c) It follows from (a) and (b) immediately.
       \end{proof}
\section{Conclusion and future direction}
\label{conclusion}
In this paper, we show that concepts based on RST and FCA can be represented in two dual instances of two-sorted modal logics \textbf{KB} and \textbf{KF}. An interesting question is how to deal with both kinds of concepts in a single framework. To address the question, we apparently need a signature including all modalities in \textbf{KB} and \textbf{KF} together. For that, the Boolean modal logic proposed in  \cite{Gargov1987} may be helpful. Hence, to investigate many-sorted Boolean modal logic and its representational power for concepts based on both RST and FCA will be an important direction in our future work.

As a formal context consists of objects, properties, and a relation  between them, the relationship between objects and properties can change over time. Hence, to model and analyze the dynamics of contexts is also desirable. Using two-sorted bidirectional relational frames, we can model contexts at some time. Therefore, integrating temporal logic with many-sorted modal logic will provide an approach to model dynamics of contexts. This is another possible direction for further research.

\end{document}